\documentclass[12pt,letterpaper,reqno]{amsart}
\usepackage{amsmath}
\usepackage{amsfonts}
\usepackage{amssymb}
\usepackage{graphicx}
\usepackage{xcolor}
\usepackage[dvipsnames]{xcolor}
\usepackage[english]{babel}
\usepackage{amsthm}

\subjclass[2020]{Primary 52A20, 53A07, 52B11.}
	
\keywords{Buoyancy, centroid, floating body, hedgehog, polytope.}

\allowdisplaybreaks
\newtheorem{theorem}{Theorem}
\newtheorem{lemma}{Lemma}
\newtheorem{claim}{Claim}
\newtheorem{remark}{Remark}

\usepackage{amsmath,amssymb}
\usepackage{amsfonts}
\usepackage{graphicx}
\usepackage[colorlinks=true, allcolors=blue]{hyperref}

\begin{document}

\author{S. Dann}
\thanks{The first author is supported by the FAPA funds from Vicerrectoría de Investigaciones de la Universidad de los Andes (INV-2019-63-1699).}

\author{O. Herscovici}

\author{S. Myroshnychenko}
\thanks{The third author is supported in part by NSERC RGPIN-2024-05044.}

\address{S. Dann, Departamento de Matemáticas, Universidad de los Andes, Carrera 1 \#18A -- 12
Bogota, 111711, Colombia}
\email{s.dann@uniandes.edu.co}

\address{O. Herscovici, Department of Mathematics and Computer Science, St. John's University, 8000
Utopia Parkway, Queens, NY 11439, USA}
\email{herscovo@stjohns.edu}

\address{S.~Myroshnychenko, Department of Mathematics and Statistics,  University of the Fraser Valley, 33844 King Rd, Abbotsford, BC V2S 7M7,  Canada}
\email{serhii.myroshnychenko@ufv.ca}

\title[Uniqueness of Flotation and Buoyancy Surfaces for Convex Polytopes]{Uniqueness of Flotation and Buoyancy Surfaces for Convex Polytopes}
\maketitle

\begin{abstract}
We prove that a convex polytope \(P \subset \mathbb{R}^d\), \(d \ge 2\), of uniform density \(\delta \in (0,1)\) floating in a liquid of density \(1\), is uniquely determined by its surface of flotation \(P_{[\delta]}\) whenever \(\delta \neq \tfrac{1}{2}\). Analogously, we show that the buoyancy surface \(\mathcal{C}_\delta P\) of a convex polytope \(P\) with prescribed density \(\delta \in (0,1)\) uniquely determines~\(P\).
\end{abstract}

\section{Introduction}

Many questions of unique determination of convex bodies given the information about their sections or projections lie at the intersection of classical mechanics, harmonic analysis, differential geometry and classical convexity \cite{ARSY1,ARSY2,BMO,HSW,O,R23}. In this work,  we address the uniqueness questions of a convex polytope submerged in liquid based on information about its surface of flotation and the surface of buoyancy. 
Interest in these problems increased further after the striking discovery in \cite{R22} of a non-spherical body that floats in equilibrium in every orientation, thereby providing a negative answer to Ulam's question. Our results show that, within the polyhedral class, flotation and buoyancy surfaces encode remarkably rigid geometric information.

To describe our results, we briefly review the settings. Let \(K \subset \mathbb{R}^d\), \(d \geq 2\), be a convex body of uniform density \(\delta \in (0,1)\) floating in a liquid of uniform density \(\rho\). The condition \(\delta \in (0,1)\) is motivated by Archimedes' principle \cite{Ar}, which states that the submerged
volume \(V_{\mathrm{sub}}\) and the total volume \(|K|_d\) satisfy 
\[
\rho V_{\mathrm{sub}} = \delta |K|_d.
\]
After normalization, we may assume that \(\rho = 1\).

For each unit vector \(\theta \in \mathbb{S}^{d-1}\), there exists a unique
real number \(t_K(\theta,\delta)\) such that the liquid surface
$
H(\theta,t_K(\theta,\delta))
=
\{x \in \mathbb{R}^d : x \cdot \theta = t_K(\theta,\delta)\}
$
determines the submerged half-space
\[
H^-(\theta,t_K(\theta,\delta))
=
\{x \in \mathbb{R}^d : x \cdot \theta \ge t_K(\theta,\delta)\},
\]
for which
\[
|K \cap H^-(\theta,t_K(\theta,\delta))|_d
=
\delta |K|_d.
\]

\begin{figure}[h!]
    \centering
    \includegraphics[scale=0.15]{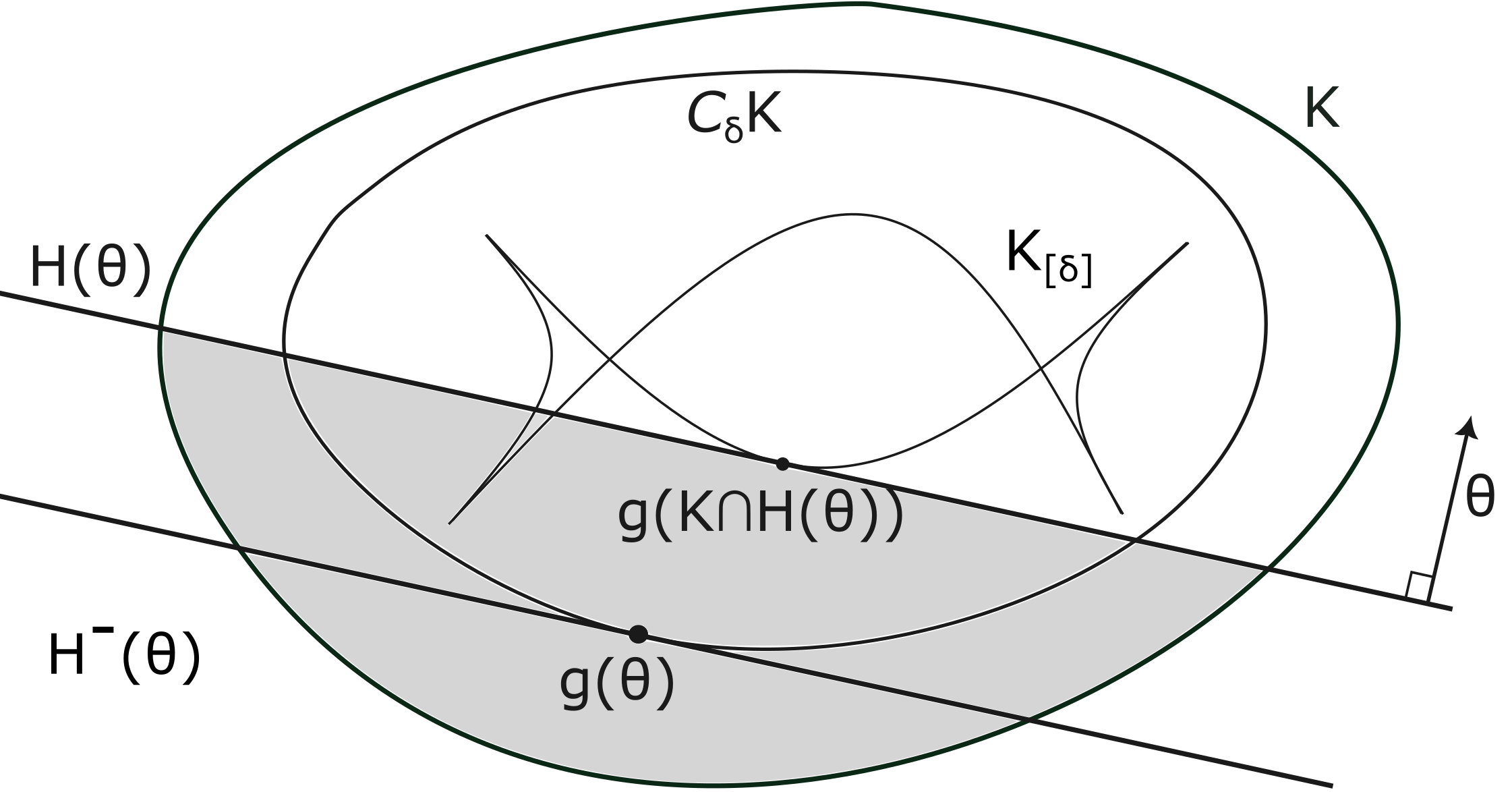}
    \caption{Surface of flotation $K_{[\delta]}$ and buoyancy surface $\mathcal{C}_\delta K$ for a convex body $K$: $g(K \cap H(\theta))$ is the centroid of $K \cap H(\theta)$ and $g(\theta)$ is the centroid of $K \cap H^-(\theta)$.}
    \label{fig:FS}
\end{figure}

For the submerged cap
$
K \cap H^-(\theta,t_K(\theta,\delta)),
$
let \(g(\theta)\) denote its centroid. As \(\theta\) varies over the
unit sphere, the image of the map \(g\) traces a hypersurface
\(\mathcal{C}_\delta K\) in \(\mathbb{R}^d\), called the
\emph{surface of centers} (or \emph{buoyancy surface}) of \(K\),
see Figure~\ref{fig:FS}.

The envelope \(K_{[\delta]}\) of the truncating hyperplanes determined
by \(t_K(\theta,\delta)\) is the classical \emph{surface of flotation}
of \(K\). This surface has been extensively studied in affine convex
geometry and is closely related to affine surface area, affine
isoperimetric inequalities, and convex floating bodies
\cite{Du,SW}. By the Second Theorem of Dupin \cite{R23,Z}, the
contact point between \(K_{[\delta]}\) and the hyperplane
\(H(\theta)\) is the centroid \(g(K \cap H(\theta))\) of the section
\(K \cap H(\theta)\). We also recall from \cite{MR} that if \(K\) is
centrally symmetric, then \(K_{[\delta]}\) is the boundary of a convex
set and therefore coincides with the boundary of the convex floating
body \cite{W}.

Our first result establishes the injectivity of the map
\(\mathcal{C}_\delta\) within the polyhedral class of convex bodies.

\begin{theorem}\label{buoyTHM}
    Let $P$ and $Q$ be convex polytopes in $\mathbb{R}^{d}$, $d \geq 2$, with uniform densities $\delta_1, \delta_2 \in (0,1)$, respectively. 
    \begin{enumerate}
        \item If $d=2$, and  $\mathcal{C}_{\delta_1} P = \mathcal{C}_{\delta_2} Q,$ then $P= Q$ and $\delta_1 = \delta_2$.
        \item If $d \geq 3$, $\delta_1 = \delta_2 = \delta$, and $\mathcal{C}_{\delta} P = \mathcal{C}_{\delta} Q$, then $P=Q$.
    \end{enumerate}
    
\end{theorem}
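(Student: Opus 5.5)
The plan is to turn the hypersurface $\mathcal{C}_\delta P$ into analytic data indexed by $\theta\in\mathbb{S}^{d-1}$, and then to locate the vertices of $P$ from the places where that data fails to be analytic.

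\emph{Step 1: parametrize $\mathcal{C}_\delta P$ by normals.} I would first invoke Dupin's first theorem. The tangent hyperplane to $\mathcal{C}_\delta P$ at $g(\theta)$ is parallel to $H(\theta)$, so $\theta$ is the outer normal there. Differentiating $g$ gives the classical formula
\[
d g(\theta)[v]=\frac{1}{\delta|P|_d}\int_{P\cap H(\theta)}\big((x-c_\theta)\cdot v\big)\,(x-c_\theta)\,dx,\qquad v\perp\theta,
\]
where $c_\theta=g(P\cap H(\theta))$ is the centroid of the section. The operator on the right is positive definite. Hence $\mathcal{C}_\delta P$ is a closed, strictly convex $C^1$ hypersurface, and its Gauss map is a bijection onto $\mathbb{S}^{d-1}$. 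Consequently the surface alone determines the map $\theta\mapsto g(\theta)$ and the support-type function $h(\theta)=g(\theta)\cdot\theta$. The spherical Hessian of $h$ then gives the normalized inertia operator
\[
I_P(\theta)=\frac{1}{\delta|P|_d}\int_{P\cap H(\theta)}(x-c_\theta)\otimes(x-c_\theta)\,dx .
\]
So the hypothesis $\mathcal{C}_{\delta_1}P=\mathcal{C}_{\delta_2}Q$ yields $g_P=g_Q$ and $I_P=I_Q$ on the whole sphere.

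\emph{Step 2: locate the vertices.} Because $P$ is a polytope, $\theta\mapsto I_P(\theta)$ is real-analytic away from the closed set
\[
\Sigma_P=\{\theta:\ H(\theta,t_P(\theta,\delta))\ \text{contains a vertex of }P\}.
\]
Across each smooth piece of $\Sigma_P$, a higher-order derivative of $I_P$ jumps. I would check this jump is nonzero by a local computation near a single vertex $v$: locally $P$ is a cone with apex $v$, and the section volume changes by a term homogeneous in the signed distance $v\cdot\theta-t_P(\theta)$. The conclusion is that $\Sigma_P=\Sigma_Q$ as sets, and moreover each component of $\Sigma_P$, say $\Sigma_v$, carries the label of a single vertex $v$.

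The family of hyperplanes $H(\theta,t_P(\theta))$ for $\theta\in\Sigma_v$ all pass through $v$. To recover $v$ I need $t_P$, and for this I would use the identity $\nabla_{\mathbb{S}}t_P(\theta)=c_\theta$ (again Dupin), together with the fact that $c_\theta$ is expressible through $g$ and the inertia data. This should determine $t_P$ up to an additive constant. The constant is then fixed because the hyperplanes through $v$ for $\theta\in\Sigma_v$ must be concurrent, which fails for a wrong shift once $\Sigma_v$ is $(d-2)$-dimensional. Hence every vertex of $P$ is a vertex of $Q$ and conversely, so $P=Q$.

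\emph{Step 3: the case $d=2$ with unknown densities.} Here the inertia reduces to the scalar $\ell(\theta)^3/(12\,\delta|P|_2)$, where $\ell(\theta)$ is the chord length. The points of $\Sigma_P$ become isolated, and each vertex is the intersection of finitely many chord lines. This makes the recovery above robust to not knowing $\delta$: the argument reconstructs $P$ up to the normalization factor $\delta|P|_2$, and this factor is then forced by the vertex positions. Once $P=Q$ is known, I would obtain $\delta_1=\delta_2$ from strict monotonicity of $\delta\mapsto g_\delta(\theta)\cdot\theta$ at a fixed $\theta$: as $\delta$ grows, the submerged cap gains mass on the dry side, which moves its centroid strictly in the direction $-\theta$.

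\emph{Main obstacle.} I expect the hardest step to be Step 2, specifically recovering $t_P$ (equivalently, the section centroids $c_\theta$) from $\mathcal{C}_\delta P$ alone, and showing that the singular jumps genuinely occur and can be separated vertex by vertex. The first thing I would try is to exploit analyticity on each cell of $\mathbb{S}^{d-1}\setminus\Sigma_P$. On such a cell the combinatorics of the cut is fixed, so $I_P$ is an explicit algebraic function of $\theta$ and $t_P(\theta)$. Analytic continuation of this expression from one cell should pin down $t_P$ and the supporting hyperplanes of the facets crossed by $H(\theta)$.
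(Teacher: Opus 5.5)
Your Step 1 is sound: the formula for $dg(\theta)$, the strict convexity of $\mathcal{C}_\delta P$, and the recovery of $g$ and of the normalized inertia $I_P$ from the surface are all correct. The monotonicity argument giving $\delta_1=\delta_2$ once $P=Q$ is also fine. The proof breaks at the assertion in Step 2 that the section centroid $c_\theta$, and hence $t_P$ up to a constant, ``is expressible through $g$ and the inertia data.'' $I_P(\theta)$ is a central second moment. It does not change if $P\cap H(\theta)$ is translated inside $H(\theta)$, so it carries no information about where $c_\theta$ sits relative to $g(\theta)$. For general convex bodies no such identity exists. A non-circular Auerbach body of density $\tfrac12$ has a circle as buoyancy curve, so it has the same $g$ and $I$ as a suitable disk. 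All halving chords of the disk pass through its centre, whereas those of the Auerbach body cannot all be concurrent: it would then be centrally symmetric with halving chords of constant length, i.e.\ a disk. So $t$ is not determined even up to an additive constant. Recovering $t_P$ is therefore exactly where polytopality has to be used, and ``analytic continuation should pin down $t_P$'' is the missing proof, not a step of it. Without $t_P$, even a verified equality $\Sigma_P=\Sigma_Q$ locates no vertex. Two smaller points also need repair. For $d\ge3$, $\Sigma_P$ is connected rather than split by vertices: by continuity, some waterline contains any given edge $[vw]$, so $\Sigma_v\cap\Sigma_w\ne\emptyset$ for adjacent $v,w$, and vertices correspond to analytic pieces, not components. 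For $d=2$, your concurrency trick fixes nothing when $\Sigma_v$ has two points, since two lines always meet, and ``the factor $\delta|P|_2$ is forced by the vertex positions'' is not explained.

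The paper's planar proof avoids $t_P$ altogether. The asymptotes of each hyperbolic arc of $\mathcal{C}$, and the axis of each parabolic arc, give the directions of the two cut sides. Dupin's third theorem, $R=\ell^3/(12\,\delta|P|_2)$, gives all chord lengths up to one common factor $\tilde c$. Hence corresponding sides of $P$ and $Q$ are parallel with lengths in ratio $\tilde c$, and Minkowski's uniqueness theorem, followed by elimination of the homothety and translation, finishes. That is the idea you need for part (1). For $d\ge3$, however, the paper routes through precisely your missing statement, Lemma~\ref{buoy} (the flotation surfaces are parallel), so you cannot borrow it. Its proof equates only the central quadratic forms, from which ``$c_P=0\Rightarrow c_Q=0$'' does not follow. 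Moreover, Ryabogin's body in $\mathbb{R}^3$ contradicts the lemma as stated for convex bodies. Paired with a ball having the same buoyancy sphere, the lemma would force all halving planes of that body through one point, i.e.\ central symmetry, and a centrally symmetric body floating in equilibrium in every orientation at density $\tfrac12$ is a ball. A complete argument along your lines must extract $t_P$ from the polytope structure of $\mathcal{C}_\delta P$ itself.
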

Recall that if \(K\) is centrally symmetric, then
\(K_{[\frac12]}\) reduces to a single point, that is, the center of
symmetry of \(K\). Consequently, one cannot expect uniqueness from
the flotation surface at density \(\delta=\frac12\). In contrast,
for all other densities, we prove that the surface of flotation is
injective within the polyhedral class, even for possibly distinct
densities.
\begin{theorem}\label{floatingTHM}
    Let $P$ and $Q$ be convex polytopes in $\mathbb{R}^{d}$, $d \geq 2$, with uniform densities $\delta_1, \delta_2 \in (0,1)$, respectively. If at least one of $\delta_1, \delta_2$ differs from $\frac{1}{2}$, and
    $
    P_{[\delta_1]} = Q_{[\delta_2]},
    $
    then $P = Q$ and $\delta_1 = \delta_2$.
\end{theorem}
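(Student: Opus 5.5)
We plan to recover $P$ from $P_{[\delta]}$ using the local structure of the flotation surface near directions where the cutting hyperplane passes through a vertex of $P$, and to show this recovery is intrinsic to $P_{[\delta]}$.

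\textbf{Piecewise structure.} For a polytope $P$, we would first describe $t_P(\theta,\delta)$ on $\mathbb{S}^{d-1}$. The sphere splits into finitely many open regions. On each region the hyperplane $H(\theta,t_P(\theta,\delta))$ meets a fixed set of edges of $P$, so it separates the vertices in a fixed way. On such a region the submerged volume is a rational expression in $\theta$ and $t$. Hence $\theta\mapsto g(P\cap H(\theta))$ is real-analytic there. By Dupin's theorem this point is the contact point of $P_{[\delta]}$ with $H(\theta)$, so $P_{[\delta]}$ is a finite union of analytic pieces. The pieces are glued along the ``critical'' hypersurfaces of directions $\theta$ where the flotation hyperplane contains a vertex $v$ of $P$. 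Along these curves the surface fails to be analytic: the section volume as a function of $t$ has a break in a derivative of order roughly $d-1$.

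\textbf{Locating the vertices.} The key claim is that the critical set of directions, together with the hyperplanes $H(\theta,t)$ on it, is determined by $P_{[\delta]}$ itself. Since $P_{[\delta]}$ is the envelope of these hyperplanes, the surface determines the support data $\theta\mapsto t(\theta)$ by taking tangent hyperplanes. The singular locus of $t$, where analyticity fails, can therefore be read off from $P_{[\delta]}$.

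For each vertex $v$, the hyperplanes through $v$ form the family $\{\theta : \theta\cdot v=t(\theta)\}$. This family is a hypersurface of directions that is not contained in any other analytic stratum. The common point of all these hyperplanes recovers $v$. So every vertex that is hit by some flotation hyperplane is recoverable.

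Every vertex is hit. To see this, take any vertex $v$ and rotate $\theta$ continuously from the outer normal direction at $v$ to its negative. The submerged side then changes from containing $v$ to not containing it. By continuity, some flotation hyperplane passes through $v$.

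Thus $P_{[\delta_1]}=Q_{[\delta_2]}$ gives the same vertex sets, and hence $P=Q$. With $P=Q$, $\delta_1=\delta_2$ follows from strict monotonicity of $t_P(\theta,\cdot)$, since the surfaces share tangent hyperplanes. One caveat: the identification of tangent hyperplanes must be done for oriented directions. The half-space convention makes $\theta$ and $-\theta$ correspond to densities $\delta$ and $1-\delta$.

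\textbf{Main obstacle.} The hardest step is showing that the non-analyticity at a vertex crossing is genuinely visible in $P_{[\delta]}$. The concern is that it might cancel, or that the surface might degenerate there. This is exactly where $\delta\neq\tfrac12$ must enter. At $\delta=\tfrac12$ with central symmetry the whole surface collapses to a point, so the vertex information is lost.

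To handle this, we would compute the jump in the relevant derivative of $t(\theta)$ across a critical direction. This jump should come from the cone of $P$ at $v$, with a nonzero coefficient $c(v)$. We would then show the jump cannot vanish identically along the critical hypersurface unless $\delta=\tfrac12$ and a vertex $v'$ opposite $v$ produces a compensating jump. In that case both crossings occur simultaneously for all nearby $\theta$, which forces the relation $\delta=1-\delta$.

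For distinct densities we would argue by contradiction on the relevant set of directions. There the two sides contribute volumes $\delta$ and $1-\delta$ asymmetrically, so the jumps cannot cancel along an open set of directions unless $\delta=\tfrac12$. If this cancellation analysis proves too delicate in general dimension, a fallback is to first treat $d=2$ explicitly, where the jumps are explicit. Then we would reduce to that case via a two-dimensional family of directions near a generic critical $\theta$.
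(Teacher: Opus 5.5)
\textbf{Verdict: genuine gap.} For $d\ge 3$ your strategy is the paper's. The non-analytic locus of $h_{P_{[\delta]}}$ is made of directions whose flotation hyperplane contains a vertex, and each vertex $v$ is recovered as the common point of the hyperplanes indexed by $Z_v=\{\theta:\ v\cdot\theta=h_{P_{[\delta]}}(\theta)\}$. Two of your points are correct. Your continuity argument that every $Z_v$ is nonempty works. Your orientation caveat is also right: as point sets $P_{[\delta]}=P_{[1-\delta]}$, so the hypothesis must be read as $h_{P_{[\delta_1]}}=h_{Q_{[\delta_2]}}$.

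\textbf{The main obstacle.} The step you call the main obstacle is only planned, and the planned mechanism is false: jumps \emph{do} cancel along open sets of critical directions with $\delta\neq\frac12$.
\begin{itemize}
\item Let $S=\{|x_1|+|x_2|+|x_3|\le 1\}$, $C=S\cap\{x_2>1-\eta\}$, $P=S\cap\{x_2\le 1-\eta\}$ and $\delta=(\frac12|S|_3-|C|_3)/(|S|_3-|C|_3)<\frac12$.
\item For $(\theta,t)$ near $(e_2,0)$ the hyperplane $x\cdot\theta=t$ misses $C$, which lies on its submerged side. So the flotation condition for $P$ becomes the bisection condition for the origin-symmetric $S$, and $h_{P_{[\delta]}}\equiv 0$ near $e_2$.
\item This neighbourhood meets $Z_{\pm e_3}$ in an arc of the equator, and $h_{P_{[\delta]}}$ is analytic on that arc.
\end{itemize}

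What is true instead is that non-analyticity is automatic, for every $\delta$, at points of $Z_v$ whose hyperplane contains no other vertex. There the submerged volume is $V_-(\theta,t)+c_v(\theta)(t-v\cdot\theta)_+^{d}$, with $V_-$ analytic and $c_v$ a rational function that is not identically zero. For a simple vertex with edge vectors $w_i$, $c_v$ is a nonzero multiple of $|\det(w_1,\dots,w_d)|/\prod_i (w_i\cdot\theta)$. If $t$ were analytic, the identity $V_-(\theta,t(\theta))=\delta|P|_d$ would extend to the other side and force $c_v=0$ on an open set.

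The hypothesis $\delta\ne\frac12$ belongs elsewhere. It excludes $Z_v$ being a great subsphere $u^\perp$, since then the hyperplanes for $\pm\theta$ would coincide and force $\delta=1-\delta$. So $\delta\ne\frac12$ is what makes the hyperplanes indexed by $Z_v$ meet only at $v$; this is exactly how the paper uses it at the end of its $d\ge 3$ proof. You would still have to show that the visible part of $Z_v$ is large enough for this, a point the paper also passes over.

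\textbf{The case $d=2$.} Your uniform argument does not cover $d=2$, which the paper handles by a different method. In the plane $Z_v$ is a pair of directions. The singular set of $h_{P_{[\delta]}}$ does not say which singular directions pair up, and one of the two can be invisible even for $\delta\neq\frac12$.
\begin{itemize}
\item Take the hexagon with vertices $(0,0),(1,1),(3,1),(4,0),(\frac72,-\frac12),(\frac12,-\frac12)$, and $\delta=\frac{12}{19}$, the area fraction above the $x_1$-axis.
\item The angles at $v=(0,0)$ and $v'=(4,0)$ are point reflections of each other through $m=(2,0)$. So every line through $m$ close to the axis cuts off the same area, and $h_{P_{[\delta]}}(\theta)=m\cdot\theta$ near $e_2\in Z_v\cap Z_{v'}$.
\item Only one line through $v$ is then detected by non-analyticity, so the recipe ``common point of the hyperplanes in $Z_v$'' breaks down.
\end{itemize}
The paper instead reads the lines of the sides of $P$ off the asymptotes of the hyperbolic arcs of $P_{[\delta]}$ (Lemma~\ref{P_Dupin}). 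It then rebuilds $P$ by its billiard-type procedure, with $\delta\ne\frac12$ entering through Lemma~\ref{Sus}. Your proposed ``fallback'' of reducing to $d=2$ supplies none of this.
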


We also note that by the Third Theorem of Dupin \cite{R23,Z} for \textit{convex bodies} $P, Q \subset \mathbb{R}^2$, the \textit{simultaneous} equalities $P_{[\delta]} = Q_{[\delta]}$ and $\mathcal{C}_{\delta}P = \mathcal{C}_{\delta}Q$ imply $P = Q$. As will become clearer in the subsequent arguments (Claim \ref{bow-tie}), the equality of the buoyancy surfaces entails the equality of the corresponding chords of liquid in the bodies. Since the midpoints of these chords are precisely the contact points with the common surface of flotation, their endpoints trace out the same boundary, and hence the bodies coincide.

The proof of Theorem \ref{buoyTHM} in $\mathbb{R}^2$ takes advantage of the known structure of the buoyancy surface \cite{Z} and employs the Minkowski uniqueness theorem for polygons \cite{S}. In higher dimensions, by the Third Dupin Theorem \cite{R23, VP}, we show that the assumptions of the theorem imply parallel surfaces of flotation (Lemma \ref{buoy}), which yields subset inclusion for two polytopes, and their consequent equality (Lemma~\ref{inclusion}).

In dimensions $d=2$, the proof of Theorem \ref{floatingTHM} is based on considerations of the piecewise-hyperbolic structure of the flotation curve \cite{VP,Z}. In higher dimensions, the argument resembles the considerations in \cite{MR2018} and makes use of the piecewise-analytic structure of the flotation surface. Then the non-analytic points of the support function for $P_{[\delta]}$ on the unit sphere uniquely determine the vertices of $P$.

These results contribute to the broader program of identifying nonlinear geometric transforms that exhibit rigidity phenomena. 
It raises further questions concerning stability, extensions beyond the polyhedral class,  connections with the affine surface area, valuations, and other intrinsic volumes \cite{S}.

\section{Preliminaries}
We work in the ambient Euclidean space $\mathbb{R}^d$, $d \geq 2$, with the usual orthonormal basis $\{e_1, \ldots, e_d\}$. Given $x=(x_1, \ldots,x_d)$ and $y=(y_1, \ldots, y_d)$ in $\mathbb{R}^d$, their scalar product is $x\cdot y = \sum_{j=1}^d x_jy_j$, and the Euclidean norm $\|x\| = \left(x \cdot x\right)^{\frac 1 2}$. The unit sphere in $\mathbb{R}^d$ is $\mathbb{S}^{d-1} = \{x \in \mathbb{R}^d: \, \|x\|=1\}.$ 
For a differentiable function $h$ on $\mathbb{S}^{d-1}$, $\nabla_S h$ stands for its spherical gradient, that is, the projection of the Euclidean gradient onto the tangent space of $S^{d-1}$.
For a subset $S \subset \mathbb{R}^d$ and $\lambda \in \mathbb{R}$, its homothetic copy is
$
\lambda S = \{\lambda x: \, x \in S\},
$
where $\lambda x = (\lambda x_1,\lambda x_2, \ldots, \lambda x_d)$. Let $\partial S$ denote the boundary of $S$, and let $\text{int}S$ denote its interior. We also use conv$S$ to denote the convex hull of $S$. For a subspace $H \subset \mathbb{R}^d$, $H^\perp$ stands for its orthogonal complement.

A body $K \subset \mathbb{R}^d$ is a compact subset of $\mathbb{R}^d$ with non-empty interior. A body $K$ is convex if for every pair of points $x,y \in K$, the closed segment 
$$
[xy] = \{\lambda x + (1-\lambda)y: \, 0 \leq \lambda \leq 1 \}
$$
is contained in $K$. We use the notation $(xy)$ and $[xy)$ to denote the corresponding open and half-closed segments, respectively. 
We are interested in a special family of convex bodies -- namely, convex polytopes, which can be realized as the convex hull of a finite set of points in $\mathbb{R}^d$.
The centroid of a body $K$ is 
\begin{equation} \label{centre}
    g(K) = \frac{1}{|K|_d} \int_K x \, dx,
\end{equation}
where $| \cdot |_d$ stands for the $d$-dimensional Lebesgue measure (volume).
We assume that a convex body $K$ has a uniform density $\delta \in(0,1)$, so it floats in a liquid of density $1$. Then the volume of $K$ under the surface of the liquid is equal to $\delta |K|_d$. 

Instead of rotating the body within the liquid, we fix $K$ and rotate the liquid surface $H(\theta)$. These liquid surfaces give rise to the $t_K(\theta, \delta)$ mentioned in the Introduction, and we adopt the notation $h_{K_{[\delta]}}(\theta)$ for the signed distance from the origin to each plane along the normal vector $\theta \in \mathbb{S}^{d-1}$. For each surface of the liquid, the center of buoyancy is defined as the centroid of the
submerged part of $K$. The surface of buoyancy $\mathcal{C}_\delta K$ is the geometric locus of all centers of buoyancy. The surface of flotation $K_{[\delta]}$ is the envelope of all the surfaces of liquid. If $K_{[\delta]}$ is the boundary of a convex body, then it is the well-known convex floating body $K_\delta$, \cite{W}.

\section{Uniqueness of Flotation Surfaces $P_{[\delta]}$}
To prove Theorem \ref{floatingTHM}, we first show that $P_{[\delta]}$ is a $C^1$-smooth pieceswise-analytic hedgehog \cite{MM}.
\begin{lemma}\label{hedgehog}
Let $P$ be a polytope in $\mathbb{R}^d$, $d \geq 2$, with uniform density $\delta \in (0,1)$. 
Then $h_{P_{[\delta]}}$ is continuously differentiable and piecewise analytic on $\mathbb{S}^{d-1}$. 
The non-analytic points form a finite union of closed $(d-2)$-dimensional subsets of $\mathbb{S}^{d-1}$.
\end{lemma}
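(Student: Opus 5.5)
The function $h=h_{P_{[\delta]}}(\theta)=t_P(\theta,\delta)$ is defined implicitly by the equation
\[
F(\theta,t):=|P\cap H^-(\theta,t)|_d-\delta|P|_d=0 .
\]
The plan is to extract every regularity statement about $h$ from this equation, using the implicit function theorem. The first step is to record the derivatives of $F$. Here $\partial_t F(\theta,t)=-|P\cap H(\theta,t)|_{d-1}$, and this is strictly negative whenever $t$ lies strictly between $\min_P x\cdot\theta$ and $\max_P x\cdot\theta$. Since $\delta\in(0,1)$, the level $t=h(\theta)$ always lies in that open interval. The derivative in $\theta$ (extended homogeneously to $\mathbb{R}^d\setminus\{0\}$) is the first moment of the section. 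For a convex body, the map $(\theta,t)\mapsto |K\cap H^-(\theta,t)|_d$ is $C^1$ on the set where the section has positive area, with
\[
\nabla_\theta F=-\int_{P\cap H(\theta,t)} x\,dx .
\]
The implicit function theorem then gives that $h$ is $C^1$. The gradient of its homogeneous extension equals $g(P\cap H(\theta,h(\theta)))$, which is the classical Dupin statement quoted in the introduction. This settles continuous differentiability.

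For piecewise analyticity, let $v_1,\dots,v_N$ be the vertices of $P$. Consider the \emph{critical set}
\[
\Sigma=\{\theta\in\mathbb{S}^{d-1}:\ h(\theta)=v_i\cdot\theta \text{ for some } i\},
\]
the set of directions in which the liquid surface passes through a vertex. Fix $\theta_0\notin\Sigma$. For $\theta$ near $\theta_0$ and $t$ near $h(\theta_0)$, the hyperplane $H(\theta,t)$ separates the vertices into the same two groups, and it meets exactly the same edges of $P$. The polytope $P\cap H^-(\theta,t)$ is then combinatorially constant. Its vertices are the vertices of $P$ on the submerged side, together with the points
\[
v_i+\frac{t-v_i\cdot\theta}{(v_j-v_i)\cdot\theta}\,(v_j-v_i),
\]
one for each crossing edge $[v_iv_j]$. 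These are rational functions of $(\theta,t)$ with nonvanishing denominators. Triangulating this combinatorially fixed polytope shows that $F$ is a rational, hence real-analytic, function of $(\theta,t)$ near $(\theta_0,h(\theta_0))$. The analytic implicit function theorem, with $\partial_tF\neq0$, then makes $h$ real-analytic near $\theta_0$. So the non-analytic points lie in $\Sigma$.

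It remains to show that $\Sigma$ is a finite union of closed $(d-2)$-dimensional sets. Write $\Sigma=\bigcup_i\Sigma_i$ with
\[
\Sigma_i=\{\theta: \phi_i(\theta)=0\},\qquad \phi_i(\theta)=h(\theta)-v_i\cdot\theta .
\]
Each $\Sigma_i$ is closed because $\phi_i$ is continuous. Since $\phi_i$ is $C^1$, it suffices to show that $\nabla_S\phi_i\neq0$ on $\Sigma_i$; then each $\Sigma_i$ is a compact $C^1$ hypersurface of the sphere. By the Dupin formula, the gradient of the homogeneous extension of $\phi_i$ is
\[
g\bigl(P\cap H(\theta,h(\theta))\bigr)-v_i .
\]
When $\theta\in\Sigma_i$, the point $v_i$ lies on the section. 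The section is a $(d-1)$-dimensional convex polytope, because the level is strictly interior. Such a polytope cannot have its centroid at a vertex, since a vertex is an extreme point. So this gradient is a nonzero vector lying in $H(\theta)$, that is, tangent to the sphere. In particular $\nabla_S\phi_i(\theta)\neq0$. By the implicit function theorem, $\Sigma_i$ is a closed $C^1$ submanifold of dimension $d-2$ (a finite set of points when $d=2$).

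I expect the main obstacle to be the bookkeeping in the analyticity step. The formula for $F$ is only locally rational, and I must verify that the combinatorial type of $P\cap H^-(\theta,t)$ really is locally constant off $\Sigma$, including the case where a facet of $P$ is parallel to $H(\theta)$. I would handle this by noting that the only genericity needed is that no vertex lies on $H(\theta,t)$. Parallel facets away from the hyperplane cause no change in which edges are crossed. For a crossing edge, $(v_j-v_i)\cdot\theta\neq0$ holds automatically, because its endpoints lie strictly on opposite sides of the hyperplane.
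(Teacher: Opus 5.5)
Your proposal is correct and follows essentially the same route as the paper. The three steps match: $C^1$ regularity through the identity $\nabla h = g\bigl(P\cap H(\theta,h(\theta))\bigr)$, which you derive from the implicit function theorem applied to the cut-off volume where the paper simply quotes Dupin's second theorem; analyticity away from the directions whose liquid level passes through a vertex, via the locally rational volume of the truncated polytope and the analytic implicit function theorem; and $(d-2)$-dimensionality of each $\{h(\theta)=v_i\cdot\theta\}$ from the nonvanishing spherical gradient $g-v_i$, where your extreme-point argument for $g\neq v_i$ is cleaner than the paper's.

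One slip to fix: at $|\theta|=1$ one has $\nabla_\theta F=+\int_{P\cap H(\theta,t)}x\,dx$, since tilting $\theta$ toward $\eta$ adds the part of the section with $x\cdot\eta>0$ to the submerged side. Only with this sign does $-\nabla_\theta F/\partial_tF$ equal the centroid $g$ that you correctly use afterwards.
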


\begin{proof}
For any convex body $P$ and the liquid level $H(\theta) = \{x \in \mathbb{R}^d: \, x \cdot \theta = h_{P_{[\delta]}}(\theta)\}$ with the normal vector $\theta$, the centroid  $g_P(\theta)$ of the section $P \cap H(\theta)$ (see \eqref{centre}) varies continuously with $\theta \in \mathbb{S}^{d-1}$. By the Second Theorem of Dupin \cite[p.~287]{VP}, 
$$g_P(\theta)=\nabla_S h_{P_{[\delta]}}(\theta) + h_{P_{[\delta]}}(\theta) \theta, \quad g_P(\theta) = H(\theta) \cap P_{[\delta]},$$ 
Thus, $\nabla_S h_{P_{[\delta]}}(\theta)$ is continuous, i.e., $h_{P_{[\delta]}}$ is $C^1$-smooth.

By the approach in \cite{Nu}, let $\partial P$ be triangulated. 
Label the vertices of a triangulated facet $\partial P_i$ by 
$\{v_i^j\}_{j=1}^{d}$ in positive orientation and its outer unit normal vector by $N_i$. 
Then the $d$-dimensional volume $|P|_d$ of the polytope $P$ can be evaluated as
\begin{equation}\label{vol}
|P|_d = 
\frac 1 d \sum_i (v_i^j \cdot N_i) |\partial P_i|_{d-1} \quad \text{for any} \quad v_i^j \in \partial P_i.
\end{equation}
Labeling the vertices of a facet $\partial P_i$ (which is a $(d-1)$-dimensional simplex) in the order of the positive orientation, its volume can be found using the determinant of $(d-1) \times (d-1)$ matrix,
$$
|\partial P_i|_{d-1} = \frac{1}{(d-1)!} \begin{vmatrix}
     v_i^{2} - v_i^1\\ 
     v_i^{3}- v_i^1\\
     \ldots\\
     v_i^{d}- v_i^1\\
\end{vmatrix},
$$
where, without loss of generality, we assume that $\partial P_i$ lies in a horizontal hyperplane~$e_d^\perp$, so each row is a vector in $\mathbb{R}^{d-1}$. 
Since the vectors $ v_i^j-v_i^1, j=2,3,\ldots,d$ are linearly independent, the outer unit normal vector $N_i = \frac{\widetilde{N}_i}{|\widetilde{N}_i|}$ to $\partial P_i$ can be determined using the generalized vector product 
$$
\widetilde{N}_i = 
\begin{vmatrix}
     e_1 & \ldots & e_{d}\\ 
     &v_i^{2}- v_i^1\\
     &\ldots\\
     &v_i^{d}- v_i^1\\
\end{vmatrix},
$$
where now the rows are vectors in $\mathbb{R}^d$ with the last zero coordinate.
We apply the above considerations in the submerged halfspace $H^-(\theta)=\{x \in \mathbb{R}^d: \, x \cdot \theta \geq h_{P_{[\delta]}}(\theta)\}$ to the polytope $P(\theta)=P \cap H^-(\theta)$ with triangulated surface $\partial P(\theta)$. Next, we partition the sphere $\mathbb{S}^{d-1}$ into two sets. The first set contains the $\theta$'s with the property that a supporting hyperplane of $P_{[\delta]}$ perpendicular to them intersects only the interiors of edges of $P$, while the second set contains the remaining $\theta$'s. A supporting hyperplane to $P_{[\delta]}$, whose normal vector $\theta$ belongs to the second set, passes through a vertex of $P$.
 
Let us describe these two sets precisely. 
We enumerate the edges of $P$ with an index. 
For each $\theta \in \mathbb{S}^{d-1}$ in the first set, let $\mathcal{S_\theta}$ denote a finite set of indices for edges intersected by the liquid's surface $H(\theta)$. 
On the other hand, each index set $\mathcal{S_\theta}$ determines an open subset of $Y_\mathcal{S_\theta} \subset \mathbb{S}^{d-1}$ of directions for which the corresponding supporting hyperplanes intersect the same edges. 
In this way, $\mathbb{S}^{d-1}$ is split into a finite union of open sets $\cup_{\mathcal{S_\theta}}Y_\mathcal{S_\theta}$ and the rest is $X$, the boundaries of closures of $Y_\mathcal{S_\theta}$'s. 
For a vertex $v_i^j$, let 
\begin{align*}
X_{ij}=\Big\{\theta \in \mathbb{S}^{d-1}: \quad
h_{P_{[\delta]}}(\theta)-v_i^j\cdot  \theta =
0\Big\},
\end{align*}
then
$$
X = \bigcup_{v_i^j} X_{ij}.
$$

Hence $X_{ij}$ is a level set for the smooth map,
$$
F_{ij}(\theta):=h_{P_{[\delta]}}(\theta)-v_i^j\cdot  \theta, \quad F_{ij}: \mathbb{S}^{d-1} \to \mathbb{R}.
$$
Indeed, for the supporting point $g_P(\theta) = g(P \cap H(\theta))$, $g_P(\theta) \in P_{[\delta]} \cap H(\theta)$, we have
\begin{align*}
    \nabla_S F_{ij}(\theta) &= \nabla_S h_{P_{[\delta]}}(\theta) - \nabla_S(v_i^j \cdot \theta) =
    \nabla_S h_{P_{[\delta]}}(\theta) - (v_i^j - (v_i^j \cdot \theta) \theta )\\
    &=(g_P(\theta) - h_{P_{[\delta]}}(\theta) \theta) + (v_i^j \cdot \theta) \theta - v_i^j = (g_P(\theta) - v_i^j) + (v_i^j \cdot \theta - h_{P_{[\delta]}}(\theta) ) \theta.
\end{align*}
Since $(g_P(\theta) - v_i^j) \perp \theta $, we have 
$$
\| \nabla_S F_{ij}(\theta) \|^2 = \|g_P(\theta)-v_i^j\|^2 + |v_i^j \cdot \theta - h_{P_{[\delta]}}(\theta)|^2 \geq \|g_P(\theta)-v_i^j\|^2>0,
$$
as $h_{P_{[\delta]}}$ does not intersect the boundary of $P$.
Hence, for example, by (\cite{Hi}, Proposition 3.2, p.~22) level set $X_{ij}$ is a $(d-2)$-dimensional subset of $\mathbb{S}^{d-1}$.
Thus, for $P(\theta)$ we have the following,
\begin{enumerate}
    \item $|P(\theta)|_d = \delta|P|_d$ for any $\theta \in \mathbb{S}^{d-1}$.
    \item $P(\theta) =$ conv$\{v_1, \ldots, v_s, u_1(\theta), \ldots, u_t(\theta)\}$, where $\{v_k\}_{k=1}^s$ are vertices of $P$ from the half-space $x \cdot \theta > h_{P_{[\delta]} }(\theta)$, and $\{u_j(\theta)\}_{j=1}^t$ are vertices of $P$ obtained by intersecting the supporting hyperplane of $P_{[\delta]}$, $x \cdot \theta = h_{P_{[\delta]}}(\theta)$, with edges of~$P$.
\end{enumerate}
To determine $u_l(\theta)$, we solve the following system for $\lambda \in \mathbb{R}$,
$$
\begin{cases}
    x \cdot \theta = h_{P_{[\delta]}}(\theta)\\
    x = \lambda (v_{j_2} - v_{j_1}) + v_{j_1},
\end{cases}
$$
for some vertices $v_{j_1}, v_{j_2}$ of $P$ that span an intersected edge. This implies
$$
u_l(\theta) = \frac{h_{P_{[\delta]}}(\theta) - v_{j_1} \cdot \theta }{(v_{j_2} - v_{j_1})\cdot \theta} (v_{j_2} - v_{j_1}) + v_{j_1}.
$$
Substituting vertices $u_k$'s and $u_j(\theta)$'s into \eqref{vol}, for a given set $\mathcal{S}_\theta$, we obtain a level set of an analytic function,
$$
|P(\theta)|_d = \delta |P|_d, \quad \theta \in Y_\mathcal{S_\theta}.
$$
By the Implicit Function Theorem for analytic functions \cite{KP}, the above can be solved explicitly for $h_{P_{[\delta]}}=h_{P_{[\delta]}}(\theta)$ in some neighborhood $U$ of $\theta \in Y_\mathcal{S_\theta}$. 
This small neighborhood $U$ has the following properties: int $U \neq \emptyset$, and $U$ does not intersect $X$.

Thus, analytically extending $h_{P_{[\delta]}}(\theta)$ (and uniquely, as we intersect the same set of edges) in each connected component of $\mathbb{S}^{d-1} \setminus X$ yields an analytic function on $Y_\mathcal{S_\theta}$ for each possible index set $\mathcal{S_\theta}$.
\end{proof}

\subsection{Proof of Theorem \ref{floatingTHM} for $d =2$}
In dimension two, the flotation surface admits a piecewise-hyperbolic
description, allowing a direct reconstruction argument. In higher
dimensions, we instead exploit the analytic structure of the support
function and its singular locus.
\begin{lemma}\label{P_Dupin}
For a polygon $P$, $P_{[\delta]}$ is a finite union of hyperbolic arcs. At a common point $p$ of two consecutive hyperbolic arcs we have one of the following:
\begin{description}
    \item[Case 1] Hyperbolic arcs have a common tangent line (smooth union). This tangent line to $P_{[\delta]}$ at $p$ intersects two non-parallel sides of~$P$.
    \item[Case 2] Hyperbolic arcs have distinct tangent lines (non-smooth union). The set of supporting lines to $P_{[\delta]}$ at $p$ intersects a pair of parallel sides of $P$. 
\end{description}
\end{lemma}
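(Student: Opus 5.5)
I would prove Lemma \ref{P_Dupin} by working directly with the envelope of the chords of liquid. Fix a direction $\theta$ and let the liquid line $H(\theta)$ meet two sides $s_1,s_2$ of $P$, lying on lines $\ell_1,\ell_2$. As long as $H(\theta)$ meets the same pair of sides and no vertex of $P$ crosses to the other side of the line, the submerged region $P\cap H^-(\theta)$ differs from a fixed polygon by a triangle with vertex $O=\ell_1\cap\ell_2$ (if $\ell_1\not\parallel\ell_2$). More precisely, the area cut off between $\ell_1$, $\ell_2$ and the chord, on the side of $O$, equals a constant $c$ plus or minus $\delta|P|_2$. Writing the chord endpoints as $O+a\,e_1'$ and $O+b\,e_2'$, with $e_1',e_2'$ unit vectors along $\ell_1,\ell_2$, this gives $ab\sin\varphi=2c'$ for a constant $c'$. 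The chords therefore cut off triangles of constant area from a fixed angle. The classical fact then applies: the envelope of such chords is an arc of the hyperbola with asymptotes $\ell_1,\ell_2$, and each chord touches it at its midpoint. This is consistent with the Second Theorem of Dupin, since the centroid of the section is the midpoint. If $\ell_1\parallel\ell_2$, constant area forces all chords to pass through a fixed point, the midpoint of the strip's median segment. That piece of the envelope is a single point, which I treat as a degenerate hyperbolic arc. Partitioning the directions into finitely many arcs according to which pair of sides is cut (this is the $d=2$ case of the partition into the sets $Y_{\mathcal S_\theta}$ in Lemma \ref{hedgehog}) gives the finite union of arcs.

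For the junction analysis, let $p$ be a common point of two consecutive pieces. It corresponds to a direction $\theta_0$ at which the chord passes through a vertex $v$ of $P$, so one of the sides cut changes from $s_1$ to $s_1'$, the two sides adjacent at $v$. There are two cases.

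\textbf{Case 1.} Suppose the other side $s_2$ is not parallel to the side it is paired with. By Lemma \ref{hedgehog}, $h_{P_{[\delta]}}$ is $C^1$, and $g_P(\theta)=\nabla_S h+h\theta$ is continuous. Before and after $\theta_0$ both hyperbolic arcs are nondegenerate, so each is the trace of $g_P(\theta)$ over a nontrivial interval of $\theta$, with tangent line $H(\theta)$. At $\theta_0$ both arcs pass through the same point $g_P(\theta_0)$ with the same tangent $H(\theta_0)$, so the union is smooth. This tangent meets $s_2$ and the side through $v$, and these are non-parallel.

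\textbf{Case 2.} Suppose that on one side of $\theta_0$ the chord meets a pair of parallel sides. On that interval of directions the envelope degenerates to a single point $p$, and the chords rotate about $p$. The lines through $p$ with normals in that interval form a whole fan of supporting lines to $P_{[\delta]}$ at $p$, so the adjacent arcs arrive at $p$ with the two extreme tangents, which are distinct, giving a corner. The lines of this fan meet the pair of parallel sides, which gives the statement of Case 2. Conversely, any non-smooth junction must come from such a fan, because along nondegenerate arcs the map $\theta\mapsto g_P(\theta)$ has a tangent determined by $\theta$ and is continuous.

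The main obstacle is the bookkeeping in the Case 1/Case 2 dichotomy. I must check that the consecutive arcs of the flotation curve are correctly matched with the order of directions. I must also check that a point of $P_{[\delta]}$ corresponding to a parallel pair is genuinely a corner with distinct one-sided tangents, rather than a cusp traversed back and forth. I would handle this by parametrizing everything by $\theta$ and using the continuity of $g_P$ from Lemma \ref{hedgehog}.
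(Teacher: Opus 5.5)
Your argument is correct, and its first half is the paper's own. The paper also reduces to waterlines cutting constant-area triangles from the angle formed by the two side lines: it shears the lines to be orthogonal and uses the tangent-triangle property of $y=k/x$, which is your relation $ab\sin\varphi=\mathrm{const}$ in oblique coordinates. It also notes that waterlines meeting two parallel sides all pass through one point.

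The difference is in how you get the junction dichotomy. The paper reads it off the buoyancy curve. By the First Theorem of Dupin, the tangents of $\mathcal{C}_\delta P$ are parallel to the waterlines, so a junction of two hyperbolic arcs of $\mathcal{C}_\delta P$ gives Case 1. A parabolic arc of $\mathcal{C}_\delta P$ (waterlines meeting two parallel sides) carries a whole interval of tangent directions, all belonging to waterlines through a single point $p$ of $P_{[\delta]}$; this gives Case 2. You stay on $P_{[\delta]}$ itself, parametrize by $\theta$, and use the $C^1$ regularity of $h_{P_{[\delta]}}$ from Lemma \ref{hedgehog} (hence only the Second Theorem of Dupin), together with the explicit fan of waterlines through the fixed point.

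Your route avoids importing the arc structure of $\mathcal{C}_\delta P$ and locates $p$ concretely. In Case 1, $p$ is the midpoint of the chord $P\cap H(\theta_0)$ through $v$, and that chord is tangent there to both hyperbolas. The paper's route instead ties the two cases to the hyperbolic and parabolic arcs of $\mathcal{C}_\delta P$, which is the structure it exploits later in the planar proof of Theorem \ref{buoyTHM}.

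Two small corrections.

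First, in Case 1 your argument yields a common tangent line $H(\theta_0)$, not a smooth curve. On a hyperbolic arc, the sign of $h+h''$ is fixed by which side of the waterline the apex $\ell_1\cap\ell_2$ lies on. This side can change at $\theta_0$, in which case $g_P$ reverses direction and $p$ is a cusp. Take the quadrilateral with vertices $(0,0),(2,-2),(2,1),(1,1)$, $\delta=\tfrac37$, and the waterline $y=0$ with submerged side $y\ge 0$. The two arcs are $\bigl(1+\tfrac a2,\mp\tfrac{a^2}{2(2-a)}\bigr)$ for $a>0$, and both leave $p=(1,0)$ to the right. So the back-and-forth you were worried about occurs in Case 1, not in Case 2; in Case 2 the distinct lines $H(\theta_a)\neq H(\theta_b)$ already settle the matter. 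This is harmless: the lemma is used only to separate common from distinct tangent lines, which is also all the paper's proof establishes. But ``smooth union'' should be read in that sense.

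Second, in the parallel case the fixed point is the common midpoint of the chords and lies on the midline of the strip, but its position depends on $\delta$. For the unit square, with chords joining the vertical sides and the submerged part below, it is $(\tfrac12,\delta)$. So it is not in general the midpoint of the median segment.
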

\begin{proof}
    Recall that for the hyperbola $y=\frac k x, k>0$, the right triangle in the first quadrant bounded by any of its tangent lines and the coordinate axes has a constant area (of $2k$ square units). 
    Consider the family of lines that intersect two non-parallel sides of $P$ and cut out volume $V$ in $P$. Extend these two sides toward their intersection, and let the finite area between the lines in the exterior of $P$ be $V_0$. By means of a shear transformation, let these two lines be orthogonal. Then this family of lines cuts out the same area $V_0+V$, which means their envelope is a hyperbola.
    Also, if the family of lines intersects two parallel sides of $P$, then they all pass through one point. The intersection point belongs to $P_{[\delta]}$.
    
    The fact that only the above two cases can occur follows from the First Theorem of Dupin (a supporting line to $P_{[\delta]}$ is parallel to a tangent line to $\mathcal{C}_\delta P$, (\cite{VP}, p. 287)), and the following considerations on $\mathcal{C}_\delta P$.
    At a common point of two consequent hyperbolic arcs on $\mathcal{C}_\delta P$ the tangent line is parallel to the supporting line at $p \in P_{[\delta]}$ (Case 1).
    If $\mathcal{C}_\delta P$ contains a parabolic arc, then the set of all its tangent lines corresponds to the set of supporting lines to $P_{[\delta]}$ at a single point $p$ (Case 2).  
\end{proof}
The following lemma highlights the special settings for the case $\delta = \frac 1 2$.
\begin{lemma}\label{Sus}
    In $\mathbb{R}^d$, $d \geq 2$, $\delta \neq \frac{1}{2}$ if and only if for any point $x \in \partial K$ and $(d-2)$-dimensional subspace $\ell$, $x \in \ell$, $\ell \cap \text{int}K = \emptyset$, there exist two distinct hyperplanes that support $K_{[\delta]}$ and contain $\ell$.
\end{lemma}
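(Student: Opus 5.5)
We interpret the statement of Lemma~\ref{Sus} as follows. $K$ is a convex body (in our application, a polytope) and $\ell$ is a $(d-2)$-dimensional affine subspace through $x\in\partial K$ that does not meet $\mathrm{int}\,K$. The hyperplanes containing $\ell$ form a one-parameter family $\{H_\phi\}$, with normals $\theta(\phi)$ running over the great circle $\Sigma=\ell_0^\perp\cap\mathbb{S}^{d-1}$, where $\ell_0$ is the direction space of $\ell$. We must show that at least two values of $\phi$ give a cutting hyperplane $H(\theta(\phi),h_{K_{[\delta]}}(\theta(\phi)))$ that contains $\ell$ exactly when $\delta\neq\frac12$. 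Every hyperplane of the pencil through $\ell$ has the form $x\cdot\theta=x\cdot\theta$ with $\theta\in\Sigma$ (the level is determined by $x$). So the problem reduces to counting zeros on the circle $\Sigma$ of the continuous function
\[
f(\theta)=\bigl|K\cap\{y: y\cdot\theta\ge x\cdot\theta\}\bigr|_d-\delta|K|_d .
\]
Zeros of $f$ come in correspondence with hyperplanes (a normal $\theta$ and the complementary $-\theta$ describe the same hyperplane with the opposite submerged side). We will argue by reducing to the plane, projecting along $\ell_0$.

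\textbf{Step 1: the volume function along the pencil.} Let $\pi$ be the projection onto the $2$-plane $\ell_0^\perp$. The point $p=\pi(\ell)$ lies outside $\mathrm{int}\,\pi(K)$, or on its boundary. Using the angle $\phi$ on $\Sigma$, the function $V(\phi)=|K\cap H^-_\phi|_d$ is continuous, and $V(\phi+\pi)=|K|_d-V(\phi)$. Since $\ell\cap\mathrm{int}\,K=\emptyset$, there is a supporting hyperplane through $\ell$; call its normal $\phi_0$. Then $V(\phi_0)\in\{0,|K|_d\}$. The pencil through $p$ sweeps $\pi(K)$ monotonically between the two supporting lines through $p$. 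This makes $V$ monotone on the arc of $\Sigma$ where the hyperplane meets $\mathrm{int}\,K$, and constant ($0$ or $|K|_d$) elsewhere.

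\textbf{Step 2: counting solutions.} By monotonicity and the intermediate value theorem, $V=\delta|K|_d$ has exactly one solution $\phi_1$ on the arc where $V$ rises from $0$ to $|K|_d$. By the symmetry $V(\phi+\pi)=|K|_d-V(\phi)$, it has exactly one solution $\phi_2$ on the opposite arc, where $V$ decreases; there $V(\phi_2)=\delta|K|_d$ means $V(\phi_2-\pi)=(1-\delta)|K|_d$. Hence the two hyperplanes $H_{\phi_1}$ and $H_{\phi_2}$ (one cutting off volume fraction $\delta$ on one side, the other cutting off fraction $\delta$ on the other side) are distinct as hyperplanes if and only if $\delta\ne 1-\delta$, that is, $\delta\neq\frac12$. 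Each of them is a liquid surface, so by the definition of the envelope and the Second Theorem of Dupin it is tangent to $K_{[\delta]}$ at the centroid of its section. This proves both directions. For the converse, when $\delta=\frac12$ the two solutions coincide as a single hyperplane, so only one hyperplane through $\ell$ supports $K_{[1/2]}$.

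\textbf{Main obstacle.} The delicate point is strict monotonicity of $V$ on the interior arc, which is needed for uniqueness of each solution and hence for the "only if" direction. We plan to get it by writing $V'(\phi)$ as a positive multiple of the moment $\int_{K\cap H_\phi}\mathrm{dist}(y,\ell)\,dy$. This is positive because $\ell$ misses $\mathrm{int}\,K$, so the section lies on one side of $\ell$ within $H_\phi$.

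A second point needs care: the statement's "supporting $K_{[\delta]}$" should be read as "tangent along a liquid hyperplane", since $K_{[\delta]}$ need not be convex. With that reading, the correspondence between liquid surfaces and tangent hyperplanes of the envelope is exactly the Second Dupin Theorem cited in the proof of Lemma~\ref{hedgehog}.
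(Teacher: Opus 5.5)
Your argument is correct and is essentially the paper's. The paper splits $K$ by the two liquid hyperplanes through $\ell$ into three pieces and notes that the middle piece has volume $|1-2\delta|\,|K|_d$; this is exactly your observation that the two solutions cut off fractions $\delta$ and $1-\delta$ from the same side of the pencil. Your version also makes explicit the existence of each such hyperplane (intermediate value theorem) and its uniqueness (strict monotonicity of $V$ on the arc meeting $\mathrm{int}\,K$), both of which the paper takes for granted.
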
 

\begin{proof}
    Let $\delta \in \left(0, \frac 1 2 \right]$ and the two hyperplanes supporting $K_{[\delta]}$ split $K$ into three sets $K = K_1 \cup K_2 \cup K_3$ with $K_2$ enclosed by two of them. First, suppose $|K_1|_d=|K_3|_d=\delta |K|_d \leq \frac{|K|}{2}$. Then $|K_2|_d = |K|_d-2\delta|K|_d$. Clearly, the chords coincide, $|K_2|_d=0$, if and only if $\delta = \frac{1}{2}$. Now, if $\delta \in \left(\frac 1 2, 1 \right)$, we can repeat the same considerations for the complements of the above sets $K_1$ and $K_3$.

    Now, suppose for $\delta \in (0,1)$,  $$|K_2|_d+|K_3|_d = \delta |K|_d = |K_2|_d+|K_1|_d,$$ which yields $|K_1|_d = |K_3|_d = \delta|K|_d - |K_2|_d = |K|_d-\delta|K|_d$. Thus, $|K_2|_d = 2\delta|K|_d - |K|_d$, which equals zero if and only if $\delta = \frac{1}{2}$.
\end{proof}

\begin{remark}
We also note that the condition $\delta = \frac 1 2$ implies $h_{P_{[\delta]}}(-\theta) = -h_{P_{[\delta]}}(\theta)$ for all $\theta \in \mathbb{S}^{d-1}$. And if there exists at least one $\theta \in \mathbb{S}^{d-1}$ such that $h_{P_{[\delta]}}(-\theta) = -h_{P_{[\delta]}}(\theta)$, then $\delta = \frac 1 2$.
\end{remark}

The next lemma provides a billiard-like algorithm for the unique reconstruction of $P$ from $P_{[\delta]}$ in $\mathbb{R}^2$ and finishes the proof of Theorem \ref{floatingTHM} in $\mathbb{R}^2$.
\begin{lemma}
    For $\delta \neq \frac{1}{2}$ and a given $P_{[\delta]}$, there exists a unique polygon $P$ that can be reconstructed from~$P_{[\delta]}$.
\end{lemma}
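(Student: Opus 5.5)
The plan is to reconstruct $P$ from $P_{[\delta]}$ step by step, in a billiard-like way, and to show that no choice is ever made along the way. By Lemma~\ref{P_Dupin}, $P_{[\delta]}$ is a finite chain of hyperbolic arcs joined at points of Case~1 (smooth) or Case~2 (corners). Each hyperbolic arc has two asymptotes, and these asymptotes are exactly the lines containing the two non-parallel sides of $P$ that the liquid lines cut along that arc. This follows from the construction in the proof of Lemma~\ref{P_Dupin}: after a shear the arc is $xy=k$, and the two sides lie on the coordinate axes. So the first step is to read off, for every arc of $P_{[\delta]}$, the pair of lines carrying the corresponding sides of $P$. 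This data is intrinsic to the curve $P_{[\delta]}$, because a hyperbolic arc determines its whole hyperbola and hence its asymptotes.

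The second step assembles these lines into the boundary of $P$. Take the arcs in cyclic order as the normal $\theta$ rotates. At a Case~1 junction, the liquid line passes through a vertex of $P$. One of the two sides being cut is replaced by the next side, and the other is kept. So consecutive hyperbolas share one asymptote, and the new asymptote meets the old discarded one at that vertex. Locating the vertex is easy: it is where the common tangent line at $p$ meets the lines already found. At a Case~2 corner $p$, the liquid lines rotate about $p$ while cutting a pair of parallel sides. The two arcs adjacent to the corner supply asymptotes lying on those parallel lines, which fixes the pair. Going once around the circle of directions $\theta$ recovers every side-line of $P$ together with every vertex, and hence $P$ as the convex hull of the vertices.

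The main obstacle is uniqueness: we must show that a second polygon $Q$ with $Q_{[\delta']}=P_{[\delta]}$ cannot yield different data. One worry is an asymptote that could be attributed to the wrong side. A sharper worry is a sequence of Case~2 pencils that all hinge on one point, where the parallel sides would be underdetermined. Lemma~\ref{Sus} is meant to handle this. Fix a boundary point $x$ and a supporting line $\ell$ of $P$ through it. Because $\delta\neq\frac12$, there are two distinct supporting lines of $P_{[\delta]}$ parallel to $\ell$, one for $\theta$ and one for $-\theta$, cutting off volumes $\delta$ and $1-\delta$ from the two ends. They do not coincide, and the strip between them has area $|2\delta-1|\,|P|_2>0$. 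I would use this in two ways:
\begin{enumerate}
\item it separates the hyperbola arising from the $\delta$-side from the one arising from the $(1-\delta)$-side, so each asymptote is matched with the correct side;
\item it makes $\delta$ recoverable from the curve: the ratio of areas computed from the reconstructed $P$ and the strip width must equal $\delta$, which forces $\delta_1=\delta_2$ in Theorem~\ref{floatingTHM}.
\end{enumerate}

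Once the side-lines and vertices are forced, the polygon $P$ is forced, and uniqueness follows. The remaining checks are routine: the reconstructed lines close up into a convex polygon, and this polygon really does reproduce the given $P_{[\delta]}$.
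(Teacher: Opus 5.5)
Your starting point agrees with the paper's: asymptotes of the hyperbolic arcs are side-lines of $P$, and vertices sit at Case~1 junctions. There is a genuine gap, however. The step ``at a Case~2 corner $p$, the two arcs adjacent to the corner supply asymptotes lying on those parallel lines'' is false in general, and with it goes the claim that one trip around $\theta$ recovers every side-line and every vertex.

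Here is a counterexample. Take the trapezoid $P$ with vertices $(0,0)$, $(10,0)$, $(6,1)$, $(4,1)$, so $|P|_2=6$, and take $\delta=0.3$, so liquid lines cut off area $1.8$.
\begin{enumerate}
\item A line meeting the top side and a slanted side cuts off a triangle at $(4,1)$ or $(6,1)$ of area at most $1$. So it is never a liquid line.
\item A line through $(a,1)$ and $(b,0)$ leaves area $(a+b-4)/2$ on its left. Hence the liquid lines meeting the top side are exactly two pencils: one through $O_1=(3.8,\tfrac12)$ (where $a+b=7.6$) and one through $O_2=(6.2,\tfrac12)$ (where $a+b=12.4$).
\item Each pencil sweeps the whole top side while its other endpoint stays on the bottom side.
\item The arcs adjacent to each corner $O_i$ have asymptote pairs $\{y=0,\ x=4y\}$ and $\{y=0,\ x=10-4y\}$. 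The line $y=1$ is never an asymptote.
\item Both liquid lines through $(4,1)$, and both through $(6,1)$, lie in these pencils. So neither vertex occurs at a Case~1 junction.
\end{enumerate}
Your procedure therefore produces only the lines $y=0$, $x=4y$, $x=10-4y$ and the vertices $(0,0)$, $(10,0)$. It cannot return $P$.

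This is exactly the situation the paper's proof is built around. The asymptotes only give a polygon $W\supseteq P$; here $W$ is the triangle with apex $(5,\tfrac54)$. The \emph{missing sides} are then recovered from the two one-sided tangent lines at each non-smooth point of $P_{[\delta]}$. Their intersections with $\partial W$ are the endpoints of the missing side, here $(4,1)$ and $(6,1)$. The remaining configurations with parallel sides are handled by a billiard argument. That argument uses Lemma~\ref{Sus} in its actual planar form: through every boundary point of $P$ pass two distinct liquid lines.

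Your reading of Lemma~\ref{Sus} as giving two parallel liquid lines for $\pm\theta$ is not what the lemma says. Neither of the two uses you propose for it addresses this obstacle. Item (1) is an assertion rather than an argument. Item (2) concerns $\delta_1=\delta_2$, which is not part of this lemma.

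To repair your argument, add an explicit step at Case~2 corners that locates the endpoints of a side occurring only in a parallel pencil. The one-sided tangent lines described above do this.
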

\begin{proof}
    To reconstruct $P$, we first need to understand how the supporting lines of $P_{[\delta]}$ intersect sides of $P$. As Lemma \ref{P_Dupin} explains, $P_{[\delta]}$ is a finite union of hyperbolic arcs that are joined at the end points in a smooth or non-smooth way. We start by determining all asymptotes of the hyperbolic arcs of $P_{[\delta]}$, and considering the intersection $W$ of all quarter-planes containing the corresponding arcs of hyperbolas. Note that $W$ is also a polygon. By the above observation, $P \subseteq W$ and $\partial P \cap \partial W \neq \emptyset$, where the intersection consists of a finite number of segments. Note that by convexity, each subset of $\partial P \cap \partial W$ that belongs to one line can be assumed to be a segment.
    
    To determine missing sides of $P$, we plot the one-sided tangent rays to non-smooth adjoint hyperbolic arcs at the common points. The sides of $W$ intersected by these two rays must contain the end points of missing sides; the segments between the intersected points on such sides of $W$ are subsets of $\partial P$.
    
    Observe that the intersection of a supporting line of $P_{[\delta]}$ with $\partial P$ is continuous with respect to a direction vector $\theta \in \mathbb{S}^2$.
    We partition each side of $P$ into a finite number of open segments: we exclude all vertices and all points on the sides that lie on liquid levels passing through vertices.
    The number of such excluded points is bounded by three times the number of vertices of $P$ (since each vertex may exclude at most two distinct points).
    This divides $\partial P$ into a finite number of segments. 
    Thus, for any point $x$ in a given segment $(pq)$, two supporting lines of $P_{[\delta]}$ that contain $x$ intersect the same pair of segments (which may coincide) regardless of the choice of $x \in (pq)$.

    For $x \in (pq)$, by Lemma~\ref{Sus}, let $y \in (rs)$ and $z \in (uv)$ be the two distinct points where the lines of liquid through $x$ intersect $\partial P$. 
    Then we investigate the following cases based on the mutual position of the intersected segments $(pq), (rs), (uv)$.

       \begin{figure}[h!]
        \centering
        \includegraphics[scale=0.35]{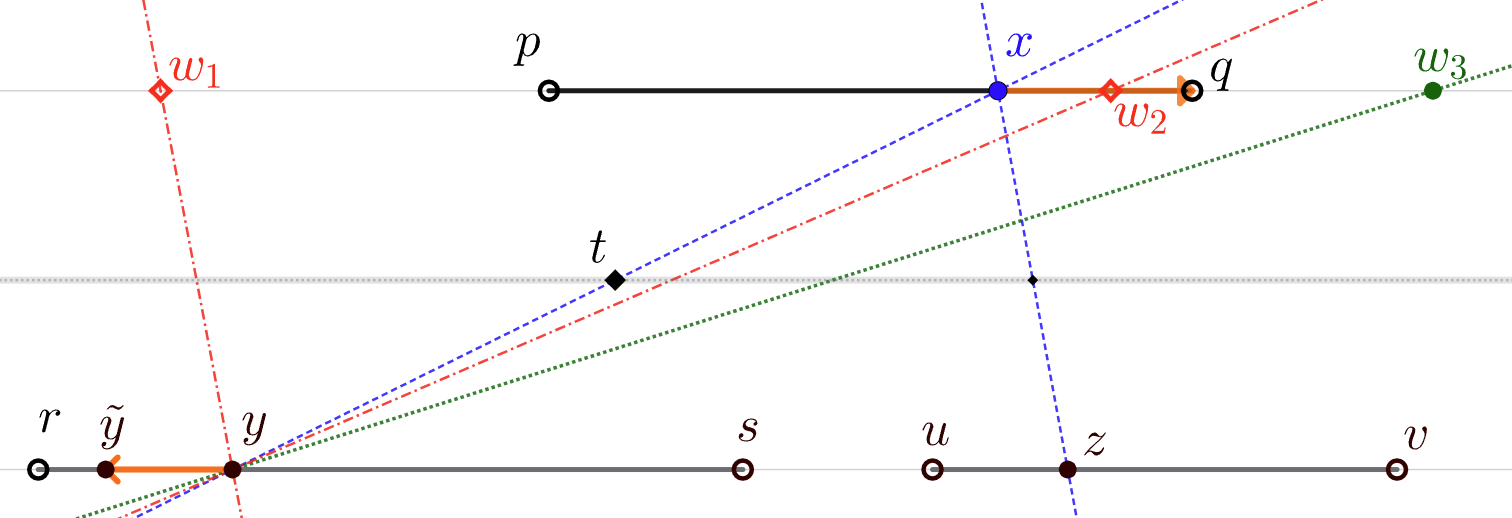}
         \caption{For $t \in P_{[\delta]}$, case $(rs)\neq (uv)$ and $pq \parallel (uv)$.}
    \label{fig:2DParallel}
     \end{figure}
     
    \begin{itemize}
     \item If $(rs)\neq (uv)$ and $(pq) \not \parallel (uv)$, then $(pq), (uv) \subset \partial W$.

     \item If $(rs)\neq (uv)$ and $(pq) \parallel (uv)$, then $(pq)$ may be parallel to $(rs)$, Figure~\ref{fig:2DParallel}. Then, $(rs)$ and $(uv)$ belong to the same side of $P$ (otherwise, they are not parallel and belong to $\partial W$). 
     If we let $x$ approach $q$ (then $y$ approaches $\tilde{y} \in [rs)$), then the second line of liquid through $y$ may intersect the same side to which $(pq)$ belongs.
     
     The intersection cannot be a point $w_1$ to the left from $x$ along the line through $p, q$, since then the area to the left from the line through $w_1, y$ is strictly less than $\delta$ (less than the area on the left from line through $x, y$). 
     If the intersection is at a point $w_2 \in (pq]$, then when $x$ approaches $q$, $w_2$ also approaches $q$, so $x=w_2=q$ in the limit. However, Lemma \ref{Sus} guarantees $x\neq w_2$. 
     Thus, the point is in a position of $w_3$ to the right from $q$ along the line through $p,q$.
    Consequently, $w_3$ lands in a segment $(p_1q_1) \neq (pq)$ on the right from $(pq)$. Hence, we can repeat the same considerations starting from $(p_1q_1)$ until we exhausted all segments laying on the same side as $(pq)$ or $(rs)$. 
    As the number of such segments on each side is finite, the point corresponding to $\tilde{y}$ or $w_3$ will belong to a segment not parallel to $(pq)$.
    Hence, that segment and its non-parallel predecessor $(s_1s_2)$ belong to $\partial W$. Based on $P_{[\delta]}$ and a given segment $(s_1s_2) \in \partial P$, we reconstruct the previous segments all the way to $(pq)$.

     \item If $(rs)=(uv)$ and $(pq) \not \parallel (uv)$, analogously $(pq), (uv) \subset \partial W$

     \item If $(rs)= (uv)$, and $(pq) \parallel (uv)$, we can apply the same approach as $(rs) \neq (uv)$ and $(pq) \parallel (uv)$, because the position of $z \in (rs)$ does not affect the argument for $x$ approaching $q$.
    \end{itemize}

We conclude that by continuously varying the liquid line to intersect $\partial W$, we reconstruct $\partial P$, except for a finite number of points that we excluded. Taking the closure of this set yields $\partial P$, and $P$.
\end{proof}

We also remark that even for non-centrally-symmetric polytopes density $\delta=\frac 1 2$ does not yield uniqueness.
\begin{remark} \label{EX}
\begin{figure}[h!]
    \centering
    \includegraphics[scale=0.2]{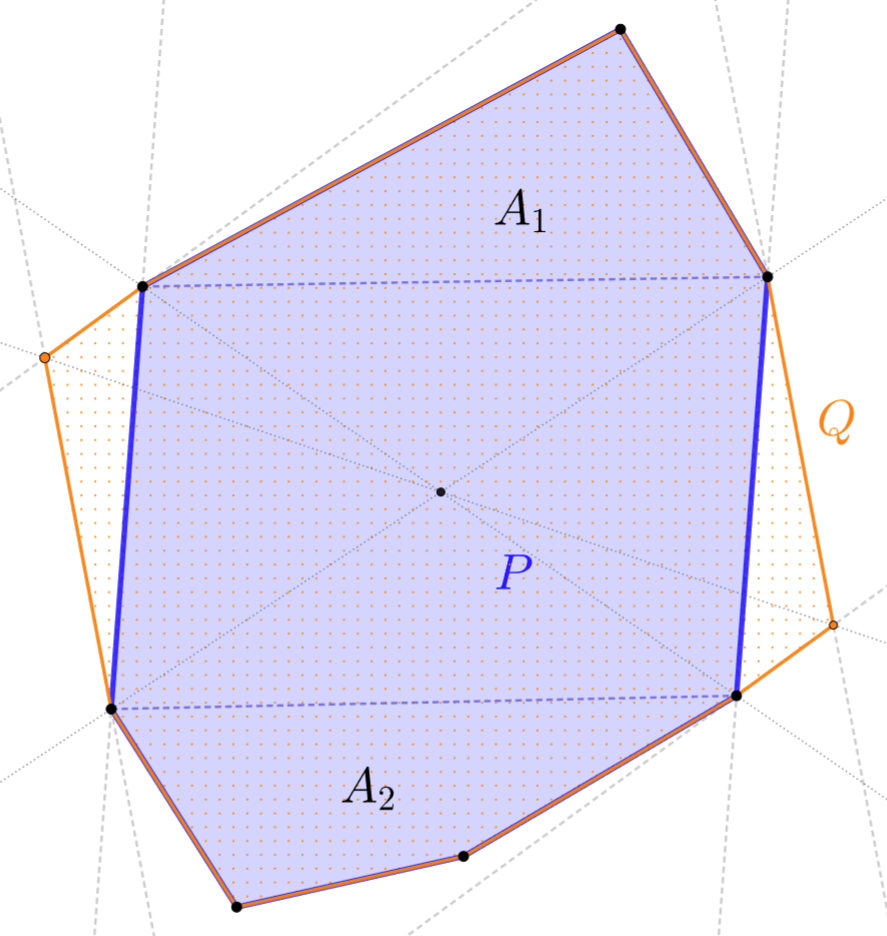}
    \caption{$P_{[\frac 1 2]}$ does not define non-symmetric polytopes uniquely.}
    \label{delta_nuance}
\end{figure}
Let $P$ be a non-symmetric polygon of density $\delta= \frac 1 2$ that contains two parallel sides of equal length. Then construct a polygon $Q$ of the same density $\frac 1 2$ that consists of the same sides, except that the two parallel sides are replaced with four pairwise parallel sides of pairwise equal lengths and the corresponding enclosed areas coincide, $A_1=A_2$ (Figure~\ref{delta_nuance}). In this scenario, $P_{[\frac 1 2]} = Q_{[\frac 1 2]}$, yet $P \neq Q$.
\end{remark}

\subsection{Proof of Theorem \ref{floatingTHM} for $\mathbb{R}^d, d \geq 3$}
Now the proof of Theorem 2 relies on the observation that the support
function of the flotation surface is analytic except in directions
corresponding to liquid levels passing through vertices of
the polytope (Lemma \ref{hedgehog}). Thus, the singular set of the support function encodes
the vertex structure of the polytope.

\begin{proof}
By Lemma \ref{hedgehog}, the support function $h_{P_{[\delta]}}$ is analytic on $\mathbb{S}^{d-1}$  except at points $\theta$ from the following set,
$$
Z=\left\{\theta \in \mathbb{S}^{d-1}: \quad  x \cdot \theta = h_{P_{[\delta]}}(\theta) \quad \text{contains a vertex of} \, P\right\}.
$$
Thus, every vertex of $v \in P$ is in bijective correspondence with $Z_v$,
$$
Z_v=\left\{\theta \in \mathbb{S}^{d-1}: \quad  v \cdot \theta = h_{P_{[\delta]}}(\theta)\right\}.
$$

If there was another polytope $Q \neq P$ such that the condition of Theorem \ref{floatingTHM} is satisfied, we would have a vertex $w \in Q$ but not a vertex of $P$, such that
$$
Z_w \in Z, \quad w \text{\, is not a vertex of \,} P,
$$
which is impossible. In other words, for $d \geq 3$, the non-analytic points of $h_{P_{[\delta]}}$ on $\mathbb{S}^{d-1}$ uniquely reconstruct $P$ by identifying the supporting cones of $P_{[\delta]}$ corresponding to the vertices of $P$.

A vertex $v \in P$ cannot be reconstructed in this way when the planes corresponding to $Z_v$ do not intersect (the cone becomes a cylinder). This is possible only when $Z_v$ is a large subsphere on $\mathbb{S}^{d-1}$. In this case, the planes intersect in an $(d-2)$-dimensional affine subspace $\ell$ passing through $v$ and int$P$. For any $\theta$ perpendicular to $\ell$, the liquid surfaces corresponding to $\theta$ and $-\theta$ coincide, resulting in $\delta = \frac 1 2$.
\end{proof}

\section{Uniqueness of Buoyancy Surfaces $\mathcal{C}_\delta P$}

\subsection{Proof of Theorem \ref{buoyTHM} in $\mathbb{R}^2$}

For a polygon $P$ in $\mathbb{R}^2$, $\mathcal{C}=\mathcal{C}_\delta P=\mathcal{C}_\delta Q$ is a closed convex curve that consists of hyperbolic and parabolic arcs (\cite{Z}, \cite{VP}). By construction, for an arc $a$ of $\mathcal{C}$ there exist a pair of intersected sides $p_1^a, p_2^a$ of $P$ and the respective sides $q_1^a, q_2^a$ of $Q$.

\begin{claim}
Side $p_1^a$ is parallel to $q_1^a$ or $q_2^a$.
\end{claim}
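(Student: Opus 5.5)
The plan is to read off the directions of the sides $p_1^a,p_2^a$ from the intrinsic geometry of the arc $a$ alone, that is, from data that depend only on $\mathcal{C}$ and not on $P$ or $Q$. Since $a\subset\mathcal{C}_\delta P=\mathcal{C}_\delta Q$, the same data computed through $Q$ must then give the directions of $q_1^a,q_2^a$. Only the unordered pair of directions matters, which is why the statement offers $q_1^a$ or $q_2^a$.

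First, we parametrize the arc. On $a$ the liquid line $H(\theta)$ meets the fixed pair of sides $p_1^a,p_2^a$. By the First Theorem of Dupin \cite{VP}, the tangent to $\mathcal{C}$ at $g(\theta)$ is parallel to $H(\theta)$. Hence the normal direction $\theta$ is a function of the point of $a$, and it is the same whether we compute through $P$ or through $Q$.

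Next, we compute $a$ explicitly in each of the two cases. If $p_1^a\not\parallel p_2^a$, extend the sides to their intersection point $O$ and use the shear/affine normalization of Lemma \ref{P_Dupin}. The submerged part is then a fixed region $R$ together with or minus a triangle $T(\theta)$ with vertex $O$ and constant area $V_0+V$. The centroid of $T(\theta)$ is $O+\frac13(u_1+u_2)$, where $u_1,u_2$ are its legs along the two side lines and the product $|u_1||u_2|$ is constant. So $g(\theta)$ is an affine image of a point on a hyperbola whose asymptotes are parallel to $p_1^a$ and $p_2^a$; the weights are constant because the areas are constant. In other words, $a$ is a hyperbolic arc with asymptotic directions those of $p_1^a,p_2^a$. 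If $p_1^a\parallel p_2^a$, the lines pass through a fixed point (Lemma \ref{P_Dupin}) and the centroid traces a parabolic arc whose axis is parallel to the common direction of $p_1^a,p_2^a$.

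Finally, we compare. A nondegenerate conic arc determines its conic by analyticity, since five points suffice. Therefore $a$ determines its type (hyperbolic or parabolic) together with its asymptotic directions or its axis direction. Performing the same computation through $Q$ shows that $\{p_1^a,p_2^a\}$ and $\{q_1^a,q_2^a\}$ have the same unordered set of directions, so $p_1^a$ is parallel to $q_1^a$ or to $q_2^a$.

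The main obstacle is verifying that the affine relation between $g(\theta)$ and the triangle's centroid preserves asymptotic directions. Concretely, we must show $g=\alpha c_R+\beta(O+\frac13(u_1+u_2))$ with constant $\alpha,\beta$, and $\beta\neq0$, so that the image conic is a homothetic copy with the same asymptotes rather than a degenerate curve. We also need the parabolic case to be distinguishable from the hyperbolic one, so that the types cannot mix between $P$ and $Q$.
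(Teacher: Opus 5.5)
Your proposal is correct and follows the paper's argument. The paper proves the claim by noting that the hyperbola containing $a$ has asymptotes parallel to $p_1^a,p_2^a$ (and, in the parabolic case, that the parabola's axis is parallel to them), so these directions are read off from $a$ itself and must also be those of $q_1^a,q_2^a$. The only difference is that you derive these asymptote and axis facts from the centroid of the triangle $T(\theta)$, whereas the paper simply cites \cite{Z}. The obstacle you flag is immediate: the first moment of the submerged part equals a fixed vector plus or minus $|T(\theta)|\,c_{T(\theta)}$, with $|T(\theta)|$ and $\delta|P|_2$ both constant along the arc, so $\beta=\pm|T(\theta)|/(\delta|P|_2)\neq 0$.
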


\begin{proof}
    If $a$ is a hyperbolic arc, sides $p_1^a, p_2^a\in P$ are known to be parallel to the asymptotes of the hyperbola that contains the arc $a$ (\cite{Z}, p. 670 ). Likewise for $q_1^a,q_2^a\in Q$. Thus, $p_1^a\parallel q_1^a$ or $p_1^a\parallel q_2^a$. The correspondence between the sides can be determined uniquely due to the given position of the hyperbola, it belongs to a quarter plane determined by its intersecting asymptotes.

    If $a$ is a parabolic arc, the sides $p_1^a, p_2^a\in P$ are parallel to the axis of symmetry of $a$, (\cite{Z}, p. 673 ) Likewise for $Q$. Then all four sides are parallel, $p_1^a\parallel p_2^a\parallel q_1^a\parallel q_2^a$. In this case, the correspondence between the sides is determined uniquely due to the given position of the parabola enclosed in the slabs determined by the pairs of the sides.
\end{proof}

\begin{claim} \label{parallel-sides}
Polygons $P$ and $Q$ have the same number of sides that are pairwise parallel.
\end{claim}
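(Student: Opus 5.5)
We read Claim~\ref{parallel-sides} as saying that the set of side directions of $P$ and of $Q$ coincide, counted with the same multiplicities, so that there is a bijection between the sides of $P$ and those of $Q$ matching parallel sides. The plan is to read all side directions of a polygon off the common curve $\mathcal{C}$, using only the previous Claim and the structure of $\mathcal{C}$ as a closed curve made of hyperbolic and parabolic arcs.

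First we note that every side of $P$ is hit by some liquid line. Fix a side $p$ of $P$ and rotate the line direction $\theta$ over $\mathbb{S}^1$. The submerged cap has area $\delta|P|_2$ with $\delta\in(0,1)$, so the liquid line always meets $\operatorname{int}P$ in a chord with two endpoints on $\partial P$. Take $\theta$ equal to the outer normal of $p$, or of the side adjacent to $p$, and move it slightly. By continuity of $t_P(\theta,\delta)$, the liquid line then meets the relative interior of $p$ for an open set of directions. Hence $p$ equals $p_1^a$ or $p_2^a$ for at least one arc $a$. The same holds for $Q$.

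Next, for each arc $a$ the previous Claim matches the pair $\{p_1^a,p_2^a\}$ with $\{q_1^a,q_2^a\}$ by parallelism. The matching is uniquely forced by the position of the hyperbola relative to its asymptotes, or of the parabola relative to its slab. We then follow $\mathcal{C}$ once around, arc by arc. Passing from arc $a$ to the next arc $a'$ corresponds to the liquid line crossing a vertex of $P$: exactly one of the two intersected sides changes, to the adjacent side of $P$. The same change must occur at the same breakpoint for $Q$, since the arcs of $\mathcal{C}$ are common to both. This gives a correspondence $\varphi$ from the sides of $P$ to the sides of $Q$ that preserves parallelism.

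We must check that $\varphi$ is well defined and consistent. Suppose a side $p$ of $P$ occurs in a maximal consecutive run of arcs. Along that run the liquid line keeps one endpoint on $p$, and the matched side of $Q$ likewise stays fixed throughout the run. At the breakpoints where the other endpoint jumps, the side that stays is exactly the one whose parallel partner persists on the $Q$ side. Over a full turn of $\theta$, each side of a polygon is entered once by each endpoint of the chord. The sequence of sides met by one chord endpoint is therefore the cyclic list of sides of $P$, and the matching sequence for $Q$ is the cyclic list of sides of $Q$. Since one sequence is obtained from the other by the parallel matching, applied term by term, the two polygons have the same number of sides, paired in parallel.

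The main obstacle is the parabolic arcs. There, both intersected sides of $P$ and both of $Q$ are mutually parallel, so parallelism alone does not tell us which side is $p_1^a$. We expect to resolve this using the slab position of the parabola to decide the "upper" versus "lower" side, as in the previous Claim. A related difficulty is a run of several consecutive parallel-type transitions, where an endpoint of the chord could in principle land on two different sides of $Q$ parallel to one side of $P$. We would rule this out by convexity: a convex polygon has at most two sides in any given direction, and those two have opposite outer normals, which the asymptote or slab orientation distinguishes.
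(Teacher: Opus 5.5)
Your strategy is the same as the paper's. You pair the two sides met along each arc of $\mathcal{C}$ by parallelism (the preceding Claim), follow $\mathcal{C}$ around, and use that adjacent sides go to adjacent sides; the paper simply asserts this last step. You correctly locate the difficulty, but you do not resolve it, and your proposed fix fails, because a hyperbolic arc of $\mathcal{C}$ does not determine outer normals. Let $L_1,L_2$ be the lines of the two met sides and $O=L_1\cap L_2$. Along the arc, the center of buoyancy is the image of the centroid of the triangle that the chord cuts from the wedge, under a fixed homothety. Its ratio is positive if $O$ lies on the wet side and negative if $O$ lies on the dry side. Both signs are compatible with the same arc and the same $\theta$, yet the side parallel to a given asymptote has opposite outer normals in the two cases. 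For example, take $P=[0,1]^2$ with small $\delta_1$, so its lower-left corner is wet. Take $Q$ a suitable translate of $[0,1]^2$ whose upper-right corner is dry, with $\eta=1-\delta_2$ satisfying $\eta^3/(1-\eta)^2=\delta_1$ (Theorem~\ref{buoyTHM}(1) allows different densities). Then $P$ and $Q$ share an arc of $xy=2\delta_1/9$, while the met sides have outer normals $-e_1,-e_2$ for $P$ and $e_1,e_2$ for $Q$.

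This breaks your tracking argument. Take a parabolic arc with horizontal sides $p_{\mathrm{up}},p_{\mathrm{low}}$ of $P$ and $q_{\mathrm{up}},q_{\mathrm{low}}$ of $Q$. Nothing in the local structure of $\mathcal{C}$ prevents the upper endpoint of $P$'s chord and the lower endpoint of $Q$'s chord from turning a corner at the same $\theta$, onto sides $p_3\parallel q_3'$. The next arcs then coincide automatically. A hyperbola is determined by a point, the tangent and curvature there, and its asymptotic directions, and the curvature is continuous along $\mathcal{C}$ by Dupin's third theorem. After this breakpoint, parallelism forces the persisting side $p_{\mathrm{low}}$ to be paired with $q_{\mathrm{up}}$, whereas the slab orientation had paired it with $q_{\mathrm{low}}$. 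So the $Q$-sides attached to one endpoint of $P$'s chord can jump between the two endpoints of $Q$'s chord, and they need not form the cyclic list of sides of $Q$.

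There are two smaller problems. First, \emph{exactly one} side changes only when the liquid line does not pass through two vertices at once. Second, the liquid line for $\theta$ equal to the outer normal of $p$ is parallel to $p$, so your argument that every side is met does not work. Use instead that each chord endpoint moves monotonically, since the chord turns about its midpoint. It goes around $\partial P$ exactly once, since exactly two oriented liquid lines pass through each boundary point.

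A way out is to count rather than track. At a breakpoint where $P$ crosses a single vertex, the direction of the side being entered can be read off from $\mathcal{C}$ alone. It is the multiset of asymptotic directions after the breakpoint minus the multiset before, with the axis counted twice on parabolic arcs; this difference is nonempty, so every such crossing is a breakpoint of $\mathcal{C}$. Each side is entered exactly twice per turn. Hence for every direction $u$, the number of breakpoints with entering direction $u$ equals twice the number of sides of $P$ parallel to $u$, and likewise for $Q$. Only simultaneous vertex crossings then need a separate argument.
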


\begin{proof}
    Let $p_i, p_j$ be two sides of $n$-gon $P$ that give rise to an arc $a_{ij}$ of $\mathcal{C}$. Similarly, there is also a pair of sides $q_i,q_j$ of $Q$ corresponding to the same arc $a_{ij}$.
    Label the sides so that $p_i\parallel q_i$. Hence, to each side of $P$ we can associate a unique parallel side of $Q$. Moreover, adjacent sides are mapped onto adjacent sides. 
    Thus, $Q$ is also an $n$-gon. Therefore, polygons $P$ and $Q$ have the same number of sides, and  the sides with the corresponding indices are parallel, $q_i\parallel p_i$.
\end{proof}

\begin{claim}\label{bow-tie}
    For all $i=1,2, \ldots,n$, the length of $p_i$ is proportional by the same positive constant to the length of $q_i$.
\end{claim}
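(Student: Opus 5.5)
We need to show that $|q_i| = c\,|p_i|$ for a single constant $c>0$ and all $i$, where by Claim \ref{parallel-sides} the sides $p_i\parallel q_i$ are indexed compatibly. The plan is to use the explicit form of the arcs of $\mathcal{C}$ to compare the two chords of liquid with the same direction, one in $P$ and one in $Q$, and then to pass from chords to side lengths through the arc endpoints.

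\textbf{Step 1: the bow-tie comparison on a hyperbolic arc.} Fix a hyperbolic arc $a=a_{ij}$ of $\mathcal{C}$. For every direction $\theta$ in the parameter range of $a$, the waterline of $P$ cuts off a triangle (after extending $p_i,p_j$ to their intersection point $O_P$). Adding the exterior corner area $V_0$ as in the proof of Lemma \ref{P_Dupin}, this triangle has constant area. The centroid of the submerged part is then an explicit affine combination of the centroid of this triangle and a fixed point.

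The triangle's centroid is $\tfrac13(O_P + A + B)$, where $A\in p_i$ and $B\in p_j$ are the endpoints of the chord. Since the tangent direction of $\mathcal{C}$ at $g(\theta)$ is parallel to the chord (First Dupin Theorem), we write the same expression for $Q$, with vertex $O_Q$ and chord endpoints $A'\in q_i$, $B'\in q_j$.

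The asymptotes of the common hyperbola are parallel to $p_i,p_j$ and to $q_i,q_j$. Its centre is therefore a fixed affine combination of $O_P$ and of $g(P)$-type data. Matching the equal curves $g_P(\theta)=g_Q(\theta)$ then forces $A'-O_Q=c_a(A-O_P)$ and $B'-O_Q=c_a(B-O_P)$ for a constant $c_a>0$ on the arc. In other words, the chord triangles are homothetic, and so are the chords: $A'B'\parallel AB$ with ratio $c_a$. This is the ``bow-tie''.

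\textbf{Step 2: the parabolic case.} For a parabolic arc, $p_i\parallel p_j$ and the chords all pass through a fixed point. The submerged part is then a trapezoid-type region, and the same computation shows the chords of $P$ and $Q$ are parallel with ratio $c_a$.

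\textbf{Step 3: $c_a$ is independent of the arc.} At a junction of two consecutive arcs $a,b$, the liquid line passes through a vertex of $P$ and, by the common tangent, through a vertex of $Q$. The chord at that $\theta$ belongs to both arcs, so $c_a=c_b$. Since the arcs form a closed chain, there is a single constant $c$.

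\textbf{Step 4: from chords to side lengths.} As $\theta$ runs over the directions whose chords have an endpoint on $p_i$, that endpoint sweeps all of $p_i$, and the corresponding endpoint of the $Q$-chord sweeps $q_i$. Because $A'-O_Q=c(A-O_P)$ holds continuously on each arc, with $O$ changing only at junctions where the relation is preserved, the displacement of the endpoint along $q_i$ is $c$ times that along $p_i$. Hence $|q_i|=c|p_i|$.

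\textbf{Main obstacle.} The hardest step is Step 1: rigorously extracting $c_a$ from the equality of the centroid curves. My first attempt would parametrize by $\theta$ and equate $g_P(\theta)=g_Q(\theta)$ together with their $\theta$-derivatives. By Dupin, the derivative is proportional to (chord length)$^3$ divided by the submerged area, times the chord direction. This gives $|A'B'|^3/(\delta_2|Q|)=|AB|^3/(\delta_1|P|)$, so $c=\bigl(\delta_2|Q|/(\delta_1|P|)\bigr)^{1/3}$ is automatically global. If this works, it bypasses Step 3, and I would adopt it as the main argument.
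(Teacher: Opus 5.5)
Your preferred route, in the last paragraph, is the paper's own argument. Equality of $\mathcal{C}$ gives equal radii of curvature at $g(\theta)$. The Third Dupin Theorem turns this into $\|AB\|^3/(\delta_1|P|_2)=\|A'B'\|^3/(\delta_2|Q|_2)$, so the chord ratio is a single global constant $c$. The paper then transfers this ratio to the sides through the corner triangles at $O_P$ and $O_Q$. Step 3 becomes superfluous. Your Step 1 computation, done carefully, would also give $c^3=\delta_2|Q|_2/(\delta_1|P|_2)$ directly on every hyperbolic arc, because the constant corner-triangle area cancels.

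The genuine error is the assertion in Steps 1 and 4 that $A'-O_Q=c_a(A-O_P)$ with $c_a>0$. Parallel side lines and parallel chords only place the $Q$-chord either in the translate of the angle at $O_P$ or in its vertical angle. The vertical angle is the second of the ``two possible positions of the chords'' the paper allows for. There $O_P$ and $O_Q$ lie on opposite (wet/dry) sides of their waterlines, and $A'-O_Q=-c(A-O_P)$. This case genuinely occurs: $Q=\delta^{-1}\bigl(g(P)-(1-\delta)P\bigr)$ with density $1-\delta$ (for $\delta=\tfrac12$, simply $Q=2g(P)-P$) satisfies $g_Q(\theta)=g_P(\theta)$ for every $\theta$, and the homothety is negative on every arc. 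So Step 4 must be run with unsigned quantities. On each arc the corner triangles are similar with ratio $c$, so the endpoint on $q_i$ moves $c$ times as fast as the endpoint on $p_i$. Waterline endpoints move monotonically, since nearby waterlines cross at the chord's midpoint by the Second Dupin Theorem. Take a $\theta$-interval during which one endpoint of the $P$-chord runs along $p_i$, so the $Q$-chord runs along $q_i$ from vertex to vertex. Integrating arclength over it gives $|q_i|=c\,|p_i|$ whatever the sign. In the parabolic case you also need that the common point of the chords is their midpoint, as the paper uses. That fact is what converts the chord ratio into a ratio of side segments there, and your ``same computation'' does not supply it.
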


\begin{proof}
    For a hyperbolic arc in $\mathcal{C}$, there exists a pair of non-parallel sides in $P$ and the corresponding pair of non-parallel sides in $Q$. Denote this hyperbolic arc by $a$ and choose a point $A$ from the interior of $a$ with the tangent line $\ell(A)$. Then, by the First Dupin's Theorem (\cite{VP}, p. 287), this tangent line is parallel to liquid surfaces $\ell^P_1$ and $\ell^Q_1$ that cut out $\delta_1 |P|_2$ from $P$ and $\delta_2|Q|_2$ from $Q$.
    Likewise, for a point $B$ close to $A$ on $a$, Figure \ref{fig:2D_hyperbolic_arc}.
    
    Let $K_1, L_1$ be the points of intersection of $\ell^P_1$ with the sides of $P$, and $K_2,L_2$ be the points of intersection of $\ell^P_2$ with the sides of $P$. 
    For $K_1,K_2,L_1, L_2\in P$, we have the corresponding points $T_1, T_2, X_1, X_2 \in Q$, respectively. Therefore, we have $K_1L_1\parallel T_1X_1$, $K_2L_2\parallel T_2X_2$ and $K_1K_2\parallel T_1T_2$, $L_1L_2 \parallel X_1X_2$.
    
    \begin{figure}[h!]
        \centering        \includegraphics[scale=0.3]{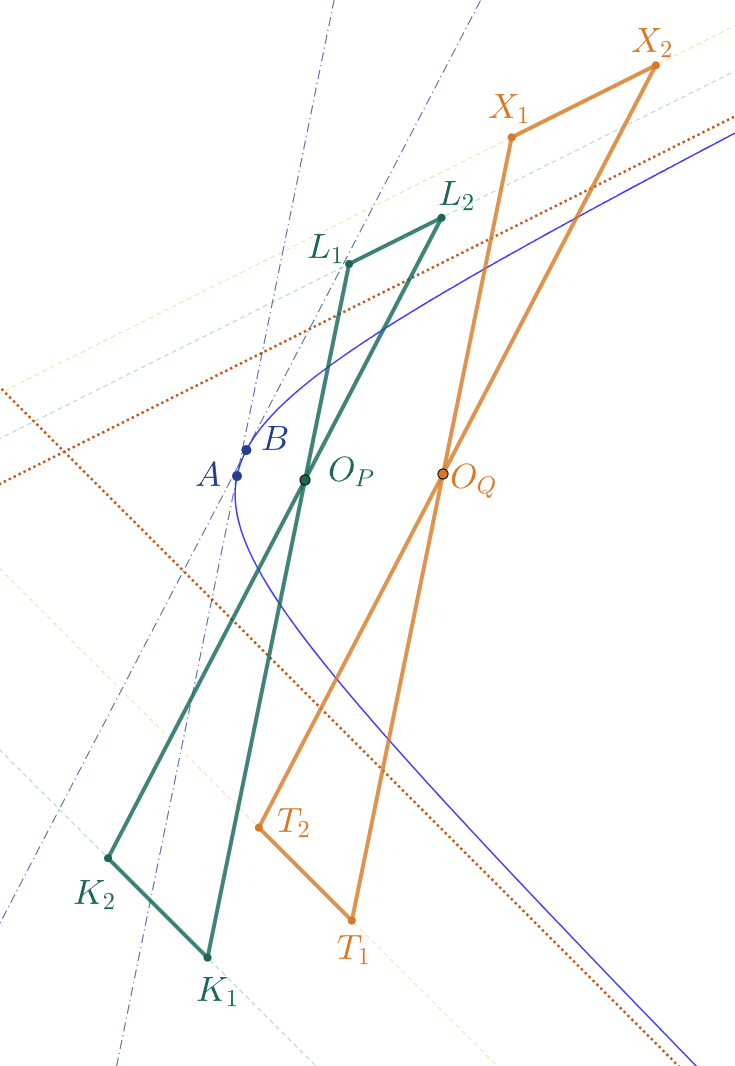}
        \includegraphics[scale=0.3]{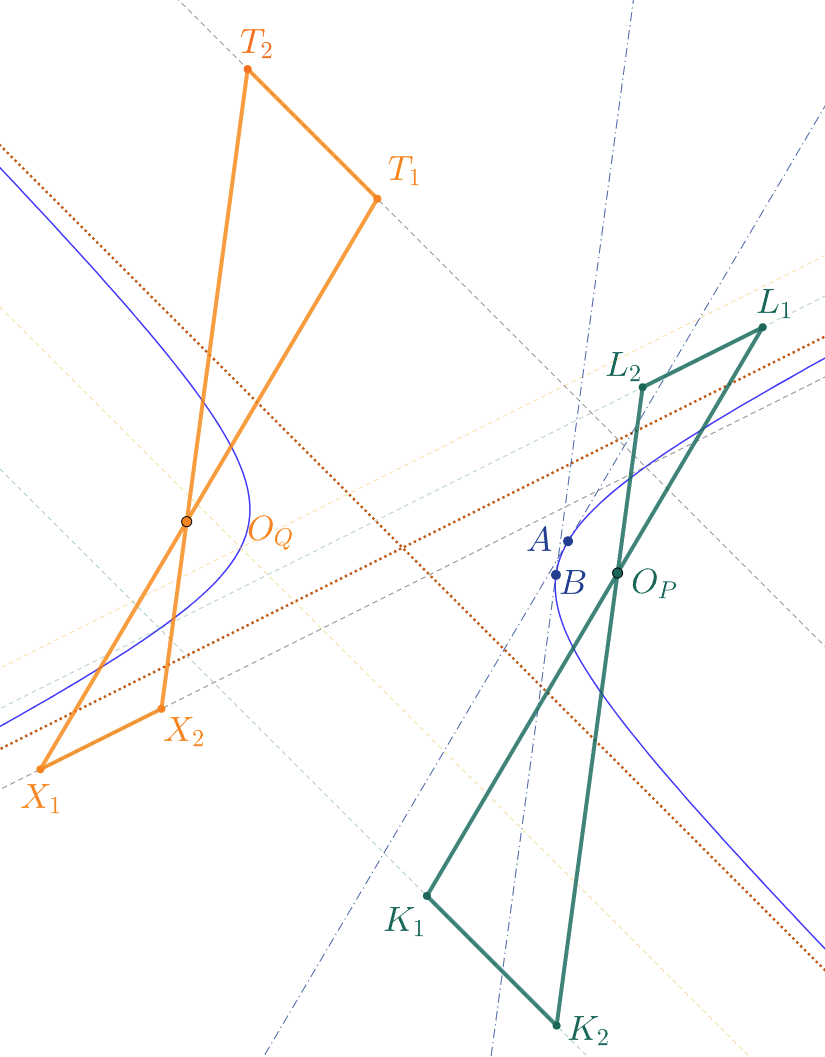}
        \caption{Hyperbolic arcs and two possible positions of the chords.}
        \label{fig:2D_hyperbolic_arc}
    \end{figure}

    Let $O_P$ be the point of intersection of lines $\ell_1^P$ and $\ell_2^P$, and $O_Q$ be the analogous point for lines $\ell_1^Q$ and $\ell_2^Q$. By the Third Theorem of Dupin (\cite{VP}, p. 288), for density $\delta$, the radius of curvature $R$ of $a$ is given by,
   \begin{equation}
       R=\frac{I}{\delta_1 |P|_d},
    \end{equation}
    where $I$ is the moment of inertia for the liquid level in the body (segment $K_1L_1$) around its centre of mass (midpoint of the segment $K_1L_1$).
    Choose a coordinate system such that $K_1L_1$ is parallel to the $x$-axis. Then the moment of inertia about the center of mass of $K_1L_1$ at the origin is
    \begin{equation}
        I=\int\limits_{-\|\frac{K_1L_1}{2}\|}^{\|\frac{K_1L_1}{2}\|} x^2\,dx=\frac{2x^3}{3}\,\Bigg{|}_{0}^{\|\frac{K_1L_1}{2}\|}=\frac{\|K_1L_1\|^3}{12}.
    \end{equation}
    Then we obtain,
    \begin{equation*}
        \frac{\| K_1L_1\|^3}{ \delta_1  |P|_2}=R =\frac{\| T_1X_1\|^3}{\delta_2 |Q|_2}.
    \end{equation*}
    Hence, 
    $$
    \| K_1L_1\|=\tilde{c} \| T_1X_1\|, \quad \|K_2L_2\|=\tilde{c} \| T_2X_2\|, \quad \text{for} \quad \tilde{c} = \sqrt[3]{\frac{\delta_1 |P|_2}{\delta_2  |Q|_2}}.
    $$
    Note that in a given quarter between a pair of intersecting lines, the length of parallel segments intersecting the lines is a monotonic function of the distance from the apex of the angle.
    
    In the hyperbolic case, the two pairs of lines containing the sides of $P$ and $Q$ are pairwise parallel, so congruence of the angles between these lines and $\tilde{c} \|T_1X_1\|=\|K_1L_1\|$ implies homothety of the sides, $\|L_1L_2\| = \tilde{c}  \|X_1X_2\|$ and, likewise, $\|K_1K_2\|=\tilde{c} \|T_1T_2\|.$

    For a parabolic arc, with the analogous notation, Figure \ref{fig:2D_parabolic_arc},
    $$
    K_1K_2 \parallel L_1L_2 \parallel X_1X_2 \parallel T_1T_2,
    $$
    and the areas of the triangles in each polygon are respectively equal,
    $$
    |\triangle K_1K_2O_P|_2 = |\triangle L_1L_2O_P|_2=\tilde{c}^2 |\triangle T_1T_2O_Q|_2, \quad |\triangle T_1T_2O_Q|_2 = |\triangle X_1X_2O_Q|_2.
    $$
    
    \begin{figure}[h!]
        \centering      
        \includegraphics[scale=0.3]{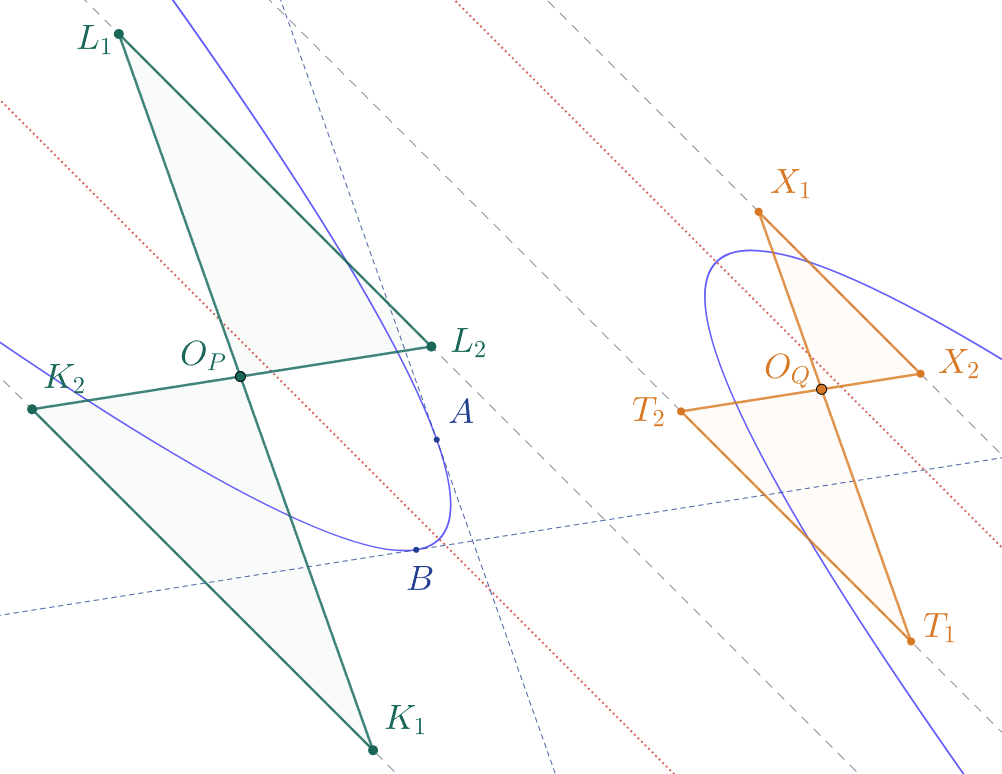}
        \caption{Parabolic arcs and possible positions of the chords.}
        \label{fig:2D_parabolic_arc}
    \end{figure}
    
    Then, $O_P \in P_{[\delta_1]}$ has equal distance to both lines containing segments $K_1K_2$ and $L_1L_2$. Likewise for $O_Q$ and $X_1X_2$, $T_1T_2$. The equality of areas then implies $\|K_1K_2\| = \|L_1L_2\|$ and $\|T_1T_2\| = \|X_1X_2\|$. 
    Analogously to the hyperbolic case, $\|K_1L_1\| = \tilde{c} \|T_1X_1\|$. Along with the fact $O_p$ splits $L_1K_1$ and $K_2L_2$ in half, we also get $\|L_1L_2\| = \tilde{c} \|X_1X_2\|$, $\|K_1K_2\| = \tilde{c} \|T_1T_2\|$.
    \end{proof}
    
\begin{claim}\label{shift}
Polytopes coincide, $P = Q$.
\end{claim}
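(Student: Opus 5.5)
Building on Claims \ref{parallel-sides} and \ref{bow-tie}, I will first show that $Q$ is a homothetic copy of $P$, then fix the scale by comparing the curvature formula with the areas, and finally use the common buoyancy curve to rule out a translation.

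\emph{Step 1: homothety.} By Claim \ref{parallel-sides}, the $n$ sides of $P$ and $Q$ correspond so that $p_i \parallel q_i$, and adjacent sides go to adjacent sides. By Claim \ref{bow-tie}, $|p_i| = \tilde c\,|q_i|$ for one constant $\tilde c>0$ and every $i$. The corresponding edges are parallel and have the same orientation. The outward normals must agree rather than be opposite, because the arc $a$ of $\mathcal{C}$ lies in the same quarter-plane or slab for both polygons. Hence the polygons $P$ and $\tilde c\, Q$ have the same outer unit normals and the same edge lengths. By the Minkowski uniqueness theorem for polygons \cite{S}, $P = \tilde c\, Q + x_0$ for some $x_0 \in \mathbb{R}^2$.

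\emph{Step 2: the constants.} Since $|P|_2 = \tilde c^2 |Q|_2$, the defining relation $\tilde c^3 = \delta_1|P|_2/(\delta_2|Q|_2)$ becomes $\tilde c^3 = \tilde c^2\,\delta_1/\delta_2$, that is, $\tilde c = \delta_1/\delta_2$. Next I use the fact that centroids of caps commute with homotheties. Since $P=\tilde c Q+x_0$, we get $\mathcal{C}_{\delta}P = \tilde c\,\mathcal{C}_{\delta}Q + x_0$ at equal densities. I then compare the arc structure of $\mathcal{C}$. In particular I compare the breakpoints, which are the directions where a liquid line passes through a vertex. For $P$ at density $\delta_1$ these breakpoint directions are governed by vertex-cut areas equal to the fraction $\delta_1$. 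For $Q$ they are governed by the fraction $\delta_2$. Because the polygons are similar and the arcs of the single curve $\mathcal{C}$ occupy the same set of normal directions, the vertex-crossing directions coincide. A vertex-cut area fraction is strictly monotone in the level, so matching these directions forces $\delta_1=\delta_2$, and hence $\tilde c=1$.

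\emph{Step 3: translation.} Now $P = Q + x_0$ with the same density $\delta$, so $\mathcal{C}_\delta P = \mathcal{C}_\delta Q + x_0$. The hypothesis gives $\mathcal{C}+x_0=\mathcal{C}$ for a compact closed curve. Averaging the points, or taking the centroid of the region it bounds, gives $x_0=0$. Therefore $P=Q$.

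The main obstacle is Step 2, namely extracting $\delta_1=\delta_2$. The curvature relation alone only gives $\tilde c=\delta_1/\delta_2$, so an extra invariant must be matched. If the breakpoint argument becomes delicate, I would instead compare the asymptote positions of a single hyperbolic arc. These determine the corner formed by two sides of $P$, and also of $Q$, up to the known homothety, which pins down both $\tilde c$ and $x_0$ at once.
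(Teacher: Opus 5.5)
Your Steps 1 and 2 each contain a gap, and neither can be repaired, because the claim itself fails whenever $\delta_1+\delta_2=1$. Write $g^K_\delta(\theta)$ for the centroid of the $\delta$-cap $K\cap\{x\cdot\theta\ge t_K(\theta,\delta)\}$. Take any polygon $Q$ and any $\delta\in(0,1)$. Set $\lambda=\delta/(1-\delta)$, $\phi(x)=-\lambda x+\frac{g(Q)}{1-\delta}$ and $P=\phi(Q)$. The map $\phi$ reverses directions and scales areas by $\lambda^2$. So it sends the $\delta$-cap of $Q$ in direction $-\theta$ onto the $\delta$-cap of $P$ in direction $\theta$. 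The complement of that $\delta$-cap of $Q$ is the $(1-\delta)$-cap of $Q$ in direction $\theta$, so $g(Q)=(1-\delta)g^Q_{1-\delta}(\theta)+\delta g^Q_\delta(-\theta)$. Hence $g^P_\delta(\theta)=\phi(g^Q_\delta(-\theta))=\frac{g(Q)-\delta g^Q_\delta(-\theta)}{1-\delta}=g^Q_{1-\delta}(\theta)$ for every $\theta$, that is, $\mathcal{C}_\delta P=\mathcal{C}_{1-\delta}Q$. Yet $P\neq Q$ unless $\delta=\frac12$ and $Q$ is centrally symmetric. For instance, $\mathcal{C}_{1/3}[-\frac12,\frac12]^2=\mathcal{C}_{2/3}[-1,1]^2$, and $\mathcal{C}_{1/2}(2g(T)-T)=\mathcal{C}_{1/2}T$ for a triangle $T$.

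This family shows where each of your steps breaks.

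In Step 1, take non-symmetric $Q$, e.g. the triangle at $\delta=\frac12$. Corresponding sides are then parallel with \emph{opposite} outer normals, so your sign claim is false. The quarter-plane argument does not decide the sign. Relative to its asymptotes, a hyperbolic arc of $\mathcal{C}$ lies in the quarter-plane oriented like the wedge of the two cut sides when the corner piece is submerged. It lies in the opposite quarter-plane when the complementary piece is submerged. $P$ and $Q$ need not make the same choice, which is why the paper keeps $P=\pm\tilde c\,Q+u$ from Minkowski's theorem.

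In Step 2, take centrally symmetric $Q$ and $\delta\neq\frac12$. Then $P=\lambda Q$ is a positive homothet with $\tilde c=\lambda=\delta_1/\delta_2$, exactly as you compute, yet $\delta_1\neq\delta_2$. The breakpoint directions do coincide. However, the $\delta$-line of $P$ passes through $\lambda v$ precisely when the $(1-\delta)$-line of $Q$ passes through $-v$. So equal breakpoint directions come from non-corresponding vertices, and the monotonicity of a single vertex-cut area never applies.

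The paper's own proof has the same two holes. Its identity $\mathcal{C}=\tilde c\,\mathcal{C}+u$ tacitly uses $\delta_1=\delta_2$. Its inference from $\mathcal{C}=-\mathcal{C}+u$ to central symmetry of $P$ is refuted by the triangle. Your Step 3 is fine and agrees with the paper. Any correct version must at least exclude $\delta_1+\delta_2=1$, and even then the case $P=-\tilde c\,Q+u$ needs a genuine argument.
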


\begin{proof}
By Minkowski uniqueness theorem (\cite{S}, Section 8.2) for polygons, 
$$
P = \pm \tilde{c} \cdot Q + u, \quad u \in \mathbb{R}^2.
$$
Consider the two possible sign choices separately, denote $\tilde{Q} = \pm \tilde{c} Q$, and assume $\tilde{c} \neq 1$. Then $x_0 = \frac{1}{1\pm \tilde{c}}u$ is the fixed point of the affine map $T(x) = \pm \tilde{c}x+u$. 
If $P=\tilde{Q}+u$, then $\mathcal{C}=\tilde{c}\, \mathcal{C}+u$, and $\mathcal{C}-x_0 = \tilde{c} (\mathcal{C}-x_0)$. Thus, up to translation, $\mathcal{C} = \pm \tilde{c} \, \mathcal{C}$. Repeated application of the identity and the fact that $\mathcal{C}$ is not a point yield $\tilde{c} = 1$.

If $P = Q+u$, then $\mathcal{C} = \mathcal{C} + u$. Repeated application of this identity shows $u=0$.

If $P=-Q+u$, then $\mathcal{C}=-\mathcal{C}+u$, or equivalently $\mathcal{C} - \frac{1}{2}u = -\left(\mathcal{C} - \frac 1 2 u\right)$.
If we adjust the coordinate system to have the origin at $\frac 1 2 u$, then, by the above considerations, each side of $P$ has a parallel counerpart of the same length in $P$, which yields $-P=P=Q$.
\end{proof}

\subsection{Proof of Theorem \ref{buoyTHM} in $\mathbb{R}^d, d \geq 3$}

We begin with a lemma whose conclusion is likely not optimal, but sufficient for our purposes.
\begin{lemma}\label{buoy}
    Let $P$ and $Q$ be two convex bodies in $\mathbb{R}^{d}$, $d\geq 3$, with uniform densities $\delta_1, \delta_2 \in (0,1)$.  If their buoyancy surfaces coincide, 
    $$\mathcal{C}_{\delta_1} P = \mathcal{C}_{\delta_2} Q,$$
    then their surfaces of flotation are parallel surfaces,
    $$
    h_{Q_{[\delta_2]}}=h_{P_{[\delta_1]}} + c, \quad \text{for some} \quad c \in \mathbb{R}.
    $$
\end{lemma}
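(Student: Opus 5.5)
The plan is to work with the support function of the buoyancy surface and use the First Theorem of Dupin, which says that the tangent hyperplane to $\mathcal{C}_{\delta} K$ at the center of buoyancy $g(\theta)$ is parallel to the liquid surface $H(\theta)$, i.e.\ has normal $\theta$. Because of this, $\mathcal{C}_{\delta}K$ is a smooth closed (strictly convex) hypersurface whose outer normal at $g(\theta)$ is $-\theta$ (the cap lies on the side $x\cdot\theta\ge t$, so we orient accordingly). Its support-type function is $\phi_K(\theta):=g(\theta)\cdot\theta$, and the map $\theta\mapsto g(\theta)$ is the inverse Gauss map. Consequently, the equality $\mathcal{C}_{\delta_1}P=\mathcal{C}_{\delta_2}Q$ of hypersurfaces forces $g_P(\theta)=g_Q(\theta)$ for every $\theta$. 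The point is that a strictly convex smooth closed hypersurface has exactly one point with a given normal. I would establish this strict convexity from the Third Dupin Theorem: the second fundamental form is $I_\theta/(\delta|K|_d)$, where $I_\theta$ is the positive definite inertia tensor of the section $K\cap H(\theta)$.

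Next I would relate $g(\theta)$ to the flotation support function $h(\theta)=h_{K_{[\delta]}}(\theta)$. Differentiate the first moment of the cap,
\[
M(\theta)=\int_{K\cap\{x\cdot\theta\ge h(\theta)\}} x\,dx=\delta|K|_d\, g(\theta),
\]
with respect to $\theta$ along the sphere, using that the volume is constant. This gives the classical formula (the proof of the First Dupin Theorem)
\[
d g(\theta)[\xi]=\frac{1}{\delta|K|_d}\int_{K\cap H(\theta)} (x-g_K(\theta))\,(x\cdot\xi)\,dx ,
\]
where $g_K(\theta)=\nabla_S h+h\theta$ is the section centroid from Lemma~\ref{hedgehog}. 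Pairing with $\theta$ yields the key identity
\[
\nabla_S\big(g(\theta)\cdot\theta\big)=g(\theta)-\big(g(\theta)\cdot\theta\big)\theta ,\qquad\text{and}\qquad dg[\xi]\cdot\theta=(h(\theta)-h(\theta))\cdot(\ldots)=0,
\]
since points of the section satisfy $x\cdot\theta=h(\theta)$. The useful quantity is the normal offset between the buoyancy surface and the flotation surface. I would show that
\[
\psi(\theta):=h(\theta)-g(\theta)\cdot\theta
\]
has $\nabla_S\psi=0$ only up to a term determined by the data of $\mathcal{C}$, and then compare $P$ and $Q$.

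A cleaner route is the following. Since $g_P=g_Q=:g$ and the tangent hyperplane of $\mathcal C$ at $g(\theta)$ is $\theta^\perp$, the Third Dupin Theorem gives
\[
\frac{I^P_\theta}{\delta_1|P|_d}=\frac{I^Q_\theta}{\delta_2|Q|_d}
\]
as quadratic forms on $\theta^\perp$. Even without using this, Archimedes' relation gives
\[
\frac{d}{d\theta}\big(g\cdot\theta\big)\ \text{involves}\ h\ \text{only through}\ \nabla_S h
\]
via $g_K(\theta)$. Concretely, the derivative of $M$ along $\xi$ equals $\int_{K\cap H}x\,(x\cdot\xi-\ldots)$, and taking the component orthogonal to $\theta$ expresses $\nabla_S h$ (the section centroid) as a function of $g$ and $dg$. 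Equality of $g$ for $P$ and $Q$ then gives equality of the section centroids projected onto $\theta^\perp$, that is, $\nabla_S h_P=\nabla_S h_Q$ on $\mathbb{S}^{d-1}$. Since $d\ge3$, the sphere is connected and simply connected. A $C^1$ function (Lemma~\ref{hedgehog}, or $C^1$ for general convex bodies by Dupin's second theorem) with vanishing spherical gradient is constant, so $h_{Q_{[\delta_2]}}-h_{P_{[\delta_1]}}=c$.

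I expect the main obstacle to be precisely this step: extracting $\nabla_S h$, the tangential part of the section centroid, from $g$ alone. The derivative $dg$ naturally encodes the inertia of the section, not its centroid. What I would try first is to use the relation $g(\theta)\cdot\theta-h(\theta)=\frac{1}{\delta|K|_d}\int_{\text{cap}}(x\cdot\theta-h)\,dx$, differentiate it along $\xi$, and use $dg[\xi]\cdot\theta=0$. This should give $\nabla_S(g\cdot\theta)=P_{\theta^\perp}g$, and hence $\nabla_S(h-g\cdot\theta)=P_{\theta^\perp}(g_K-g)$. If the last quantity cannot be shown to agree for $P$ and $Q$, I would fall back on the inertia-tensor equality from the Third Dupin Theorem to match the sections and deduce equal section centroids in the tangential directions.
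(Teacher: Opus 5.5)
\textbf{Verdict: the proposal has a genuine gap.} The middle step, $\nabla_S h_{P_{[\delta_1]}}=\nabla_S h_{Q_{[\delta_2]}}$, is never proved, and none of the mechanisms you suggest can prove it.

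Your outer steps are fine. By the bathtub principle, $\mathcal{C}_\delta K$ bounds a strictly convex body whose unique point with outer normal $\theta$ is $g(\theta)$. (That normal is $+\theta$, not $-\theta$, since the cap is $\{x\cdot\theta\ge t\}$.) So $\mathcal{C}_{\delta_1}P=\mathcal{C}_{\delta_2}Q$ does give $g_P\equiv g_Q$, and a $C^1$ function with zero spherical gradient on the connected sphere is constant.

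The middle step fails. After centring, your own first-variation formula reads
\[
dg(\theta)[\xi]=\frac{1}{\delta|K|_d}\int_{K\cap H(\theta)}\bigl(x-g_K(\theta)\bigr)\,\bigl((x-g_K(\theta))\cdot\xi\bigr)\,dx .
\]
This is a centred second moment, unchanged if the section is translated inside $H(\theta)$. So the claim in your ``cleaner route'', that the tangential part of $dM$ expresses $\nabla_S h$ through $g$ and $dg$, is false. The identity $\nabla_S(h-g\cdot\theta)=\mathrm{pr}_{\theta^\perp}(g_K-g)$ is just Dupin's second theorem minus $\nabla_S(g\cdot\theta)=\mathrm{pr}_{\theta^\perp}g$, so using it is circular.

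Your fallback is exactly the paper's route, and it fails for the same reason. The paper writes the normalized inertia about $\Pi$ as $\frac{1}{\delta|K|_d}\bigl(a^{\top}\Sigma a-c^2|K\cap H(\xi)|_{d-1}\bigr)$, with $\Sigma=\int_{K\cap H(\xi)}v\,v^{\top}dv$ and $c=-a\cdot g_K(\xi)$. It then concludes that the $c^2$-terms for $P$ and $Q$ must match, so $c_P=0$ forces $c_Q=0$. But $c^2=(a\cdot g_K(\xi))^2$ is itself a quadratic form in $a$. The whole expression is the centred inertia tensor of the section, which carries no information about where the centroid lies. The splitting does not follow, so the paper's proof has the same hole.

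In fact this step cannot be closed for general convex bodies, so no refinement of such pointwise arguments will work. Let $K$ be Ryabogin's non-spherical body of density $\frac12$ that floats in equilibrium in every orientation (the one mentioned in the introduction).
\begin{enumerate}
\item Equilibrium means $g(\theta)-g(K)\parallel\theta$. With the origin at $g(K)$, your identity gives $\nabla_S(g\cdot\theta)=\mathrm{pr}_{\theta^\perp}g=0$, so $\mathcal{C}_{1/2}K$ is a sphere centred at $g(K)$.
\item This sphere coincides with $\mathcal{C}_{1/2}B$ for a ball $B$ of suitable radius centred at $g(K)$.
\item Suppose the flotation surfaces were parallel. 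At $\delta=\frac12$ both support functions satisfy $h(-\theta)=-h(\theta)$, which forces $c=0$. Then every halving hyperplane of $K$ passes through $g(K)$.
\item Hence $K$ is centrally symmetric, since the hemispherical transform is injective on odd functions.
\item The Third Dupin theorem then makes $\int_{K\cap\theta^\perp}|x|^2\,dx=\frac{1}{d+1}\int_{\mathbb{S}^{d-1}\cap\theta^\perp}\rho_K^{d+1}$ constant in $\theta$. So the even function $\rho_K$ is constant and $K$ is a ball, a contradiction.
\end{enumerate}
So a correct argument must genuinely use that $P$ and $Q$ are polytopes. Neither your proposal nor the paper's proof of this lemma does so.
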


\begin{proof}
By the Third Theorem of Dupin (\cite{VP}, p. 288), the equality of buoyancy surfaces implies that for any $\xi \in \mathbb{S}^{d-1}$,
$$
\frac{1}{\delta_1|P|_d} I_{P \cap H_P(\xi)} (\Pi_P) = \frac{1}{\delta_2|Q|_d} I_{Q \cap H_Q(\xi)} (\Pi_Q),
$$
where
the surface of the liquid $H_P(\xi)$ is parallel to the tangent plane to $\mathcal{C}_\delta P$ with the normal vector $\xi$ (so it is parallel to $H_Q(\xi)$ as well);
$\Pi_P$ is any $(d-2)$-dimensional plane passing through the centroid $g_P(\xi)$ of $P\cap H_P(\xi)$ (and it is parallel to $\Pi_Q$); and
$$
I_{P \cap H_P(\xi)} (\Pi) = \int_{P\cap H_P(\xi)}\text{dist}^2(\Pi,v) \, dv.
$$
Here, dist denotes the Euclidean distance between a point $x$ and an affine subspace~$\Pi$. Since liquid levels are parallel, we may introduce the same coordinate system $\{x_1, \ldots,x_{d-1}\}$ for both $H_P(\xi)$ and $H_Q(\xi)$. Let the equation of $\Pi_P$ in these coordinates be
$$
\Pi_P: \, a_1x_1+\ldots+a_{n-1}x_{n-1} + c_P = 0, \qquad a:=(a_1,\ldots,a_{d-1}) \in \mathbb{S}^{d-2} \subset H_P(\xi).
$$
The fact that $g_P(\xi) \in \Pi_P$ implies $a \cdot g_P(\xi) = -c_P$.
We also note
$$
\text{dist}^2(\Pi_P,v) = \frac{1}{a\cdot a}(a \cdot v + c_P)^2 = (a\cdot v)^2 + 2(a \cdot v) c_P + c_P^2.
$$
Then,
\begin{align*}
&\frac{1}{\delta_1|P|_d} I_{P \cap H_P(\xi)} (\Pi_P) =\\
&\frac{1}{\delta_1|P|_d} \int_{P\cap H_P(\xi)}(a \cdot v)^2 \, dv + 2c_P |P \cap H_P(\xi)|_{d-1} \, (a \cdot g_P(\xi))  + c_P^2 |P \cap H_P(\xi)|_{d-1} = \\
&\frac{1}{\delta_1|P|_d} \cdot \sum_{i,j=1}^{d-1} a_i a_j \left(\int_{P\cap H_P(\xi)} v_i v_j \, dv\right) - \frac{c_P^2}{\delta_1|P|_d} |P \cap H_P(\xi)|_{d-1}. 
\end{align*}
Thus, for a fixed $\xi \in \mathbb{S}^{d-1}$,
$\frac{1}{\delta_1|P|_d} I_{P \cap H_P(\xi)} (\Pi_P) = \frac{1}{\delta_2|Q|_d} I_{Q \cap H_Q(\xi)} (\Pi_Q)$ holds for all $a \in \mathbb{S}^{d-2}$ (note that $\Pi_P$ is parallel to $\Pi_Q$ if and only if the corresponding $a$'s are parallel for both planes $\Pi_P$ and $\Pi_Q$). The equality of the quadratic forms above for any $a \in \mathbb{S}^{d-2}$ implies
$$
\frac{c_P^2}{\delta_1 |P|_d}  |P \cap H_P(\xi)|_{d-1} = \frac{c_Q^2}{\delta_2  |Q|_d}  |Q \cap H_Q(\xi)|_{d-1}.
$$
Note that $c_P$ is the distance from $\Pi_P$ to the centroid $P \cap H_P(\xi)$. However, if we choose the coordinate system such that $g_P(\xi) = (0,\ldots,0) \in \mathbb{R}^{d-1}$, then $c_P = 0$, which implies $c_Q = 0$, and $g_Q(\xi) = (0,\ldots,0) = g_P(\xi)$. Consequently, the segment connecting $g_P(\xi)$ and $g_Q(\xi)$ in $\mathbb{R}^{d}$ is always perpendicular to the liquid levels or the centroids coincide (if and only if the liquid surfaces coincide). If they do not coincide, we may regard $g_P(\xi)$ as the unique point the corresponding flotation surface and the liquid surface have in common. Thus, for a continuous function $d(\xi)$ and contact points $g_Q(\xi), g_P(\xi)$ of the corresponding envelopes, we have (see \cite{MM}),
\begin{align*}
    g_Q(\xi) &= g_P(\xi) + d(\xi) \xi\\
    h_{Q_{[\delta_2]}}(\xi) \xi  + \nabla_S h_{Q_{[\delta_2]}}(\xi) &= h_{P_{[\delta_1]}}(\xi) \xi  + \nabla_S h_{P_{[\delta_1]}}(\xi)+ d(\xi) \xi.
\end{align*}
Since $\nabla_S h_{P_{[\delta_1]}}(\xi) \perp \xi$ and $\nabla_S h_{Q_{[\delta_2]}}(\xi) \perp \xi$, we get 
$
\nabla_S h_{Q_{[\delta_2]}}(\xi) = \nabla_S h_{P_{[\delta_1]}}(\xi),
$
which implies $h_{Q_{[\delta_2]}}(\xi) = h_{P_{[\delta_1]}}(\xi) +c$ for any $\xi \in \mathbb{S}^{d-1}$ and a constant $c \in \mathbb{R}$. Consequently, $d(\xi) \equiv c$. 
\end{proof}

The following Lemma shows that two convex bodies $P \subseteq Q$ of same density $\delta$ coincide if and only if $\mathcal{C}_{\delta}P = \mathcal{C}_{\delta}Q$.
\begin{lemma}\label{inclusion}
Let $P$ and $Q$ be convex bodies in $\mathbb{R}^d, d \geq 3$, both of uniform density $\delta \in (0,1)$. If $P \subseteq Q$ and $\mathcal{C}_{\delta} P = \mathcal{C}_{\delta} Q$, then $P=Q$.
\end{lemma}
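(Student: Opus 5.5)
We propose to combine Lemma \ref{buoy} with a volume comparison. Since $\mathcal{C}_\delta P=\mathcal{C}_\delta Q$, Lemma \ref{buoy} (with $\delta_1=\delta_2=\delta$) gives a constant $c$ with $h_{Q_{[\delta]}}=h_{P_{[\delta]}}+c$ on $\mathbb{S}^{d-1}$. The proof of Lemma \ref{buoy} also shows more: for every $\xi$, the liquid levels $H_P(\xi)$ and $H_Q(\xi)$ are parallel, and the Third Dupin identity yields
$$
\frac{1}{|P|_d}\int_{P\cap H_P(\xi)}(a\cdot v)^2\,dv=\frac{1}{|Q|_d}\int_{Q\cap H_Q(\xi)}(a\cdot v)^2\,dv
$$
for all $a\in\mathbb{S}^{d-2}$, measured from the common centroid projection. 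Taking the trace over an orthonormal basis of $a$'s, the polar moments of inertia of corresponding sections agree up to the factor $|P|_d/|Q|_d\le 1$.

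The first step is to determine $c$. Apply the relation to $\xi$ and $-\xi$. The submerged caps satisfy $|Q\cap H^-_Q(\xi)|=\delta|Q|_d$, and the complementary cap at level $-\xi$ has volume $(1-\delta)|Q|_d$. Summing $h_{Q_{[\delta]}}(\xi)+h_{Q_{[\delta]}}(-\xi)=h_{P_{[\delta]}}(\xi)+h_{P_{[\delta]}}(-\xi)+2c$ gives
$$
w_Q(\xi)=w_P(\xi)+2c,
$$
where $w_K(\xi)$ is the signed width of the slab between the two $\delta$-levels of $K$ in directions $\xi,-\xi$. For $\delta\neq\frac12$ this slab contains $|2\delta-1|\,|K|_d$ of volume, as in Lemma \ref{Sus}. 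For $\delta=\frac12$ both widths vanish, so $c=0$ directly.

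For $\delta\ne\frac12$, we aim to show $c=0$ using $P\subseteq Q$. Integrate the identity $g_Q(\xi)=g_P(\xi)+c\,\xi$ against the section areas, or compare the buoyancy centroids. The center of buoyancy is a known function of $h_{K_{[\delta]}}$ and the sections (First Dupin theorem). A shift by $c\xi$ of every liquid level, together with $P\subseteq Q$, should force $|Q|_d\ge|P|_d$ with equality only if $c=0$, and also the reverse inequality from the normalized inertia identity. Averaging the trace identity over $\xi\in\mathbb{S}^{d-1}$ relates $\int\!\!\int$ of $\|v-g\|^2$ over all liquid sections. This is a rotation-invariant second moment of $K$, which is strictly monotone under inclusion after dividing by $|K|_d^{(d+2)/d}$. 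I would try to show that this normalized quantity is the same for $P$ and $Q$; monotonicity plus the normalization then force $|P|_d=|Q|_d$.

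Once $|P|_d=|Q|_d$ and $P\subseteq Q$, the conclusion is immediate: $Q\setminus P$ is a closed-complement set of measure zero in a convex body, hence $P=Q$ since both are closures of their interiors. The main obstacle is the middle step, extracting $|P|_d=|Q|_d$, equivalently $c=0$, from the parallelism $h_{Q_{[\delta]}}=h_{P_{[\delta]}}+c$ and the inertia identity. I would first attempt it directly: if $c>0$, every $\delta$-level of $Q$ lies strictly outward from that of $P$. Then pick $\xi$ with $H_P(\xi)$ supporting nothing special and compare the caps $P\cap H_P^-(\xi)\subseteq Q\cap H_P^-(\xi)$. Integrating over $\xi$ with the identity $\int_{\mathbb{S}^{d-1}}(h_{K_{[\delta]}}(\xi)-h_{K_{[\delta]}}(-\xi))\,d\xi$ should yield a sign contradiction.
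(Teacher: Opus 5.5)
Your proposal has a genuine gap exactly where you place the main obstacle. Nothing in it actually produces $|P|_d=|Q|_d$ or rules out $P\subsetneq Q$, and the routes you sketch do not work.

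First, $c=0$ is not equivalent to $|P|_d=|Q|_d$, and it does not even finish your ``direct'' case $\delta=\tfrac12$. For concentric balls $B\subsetneq 2B$, every halving hyperplane passes through the center, so $B_{[1/2]}=(2B)_{[1/2]}$. Thus $c=0$ and $P\subseteq Q$, yet $P\neq Q$. Reducing $\mathcal{C}_\delta P=\mathcal{C}_\delta Q$ to parallel flotation surfaces discards exactly the information that separates such pairs (here, the size of the buoyancy surface, i.e.\ the section inertia). You never return to that information with an actual argument.

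Second, the monotonicity you invoke is false. A genuine second moment of $K$, divided by $|K|_d^{(d+2)/d}$, is invariant under dilations. It therefore cannot be strictly monotone under inclusion, since $K\subsetneq\lambda K$ for $\lambda>1$ when dilating about an interior point. Moreover, the averaged section inertia scales like $\lambda^{d+1}$, so it is not a second moment of $K$ in the first place.

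Third, $\int_{\mathbb{S}^{d-1}}(h(\xi)-h(-\xi))\,d\xi=0$ for every integrable $h$, by antipodal symmetry. The odd part of $h_{K_{[\delta]}}$ does not even see $c$, so that integral cannot give a sign contradiction.

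Finally, $|Q|_d\ge|P|_d$ is trivial from $P\subseteq Q$; what you need is the reverse inequality. The inertia identity (section moments of $Q$ equal $|Q|_d/|P|_d$ times those of $P$) gives none on its face, because larger sections of the larger body are compatible with a factor $\ge 1$. Your closing step ($P\subseteq Q$ and $|P|_d=|Q|_d$ imply $P=Q$) is correct, but it is never reached.

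For comparison, the paper does not use Lemma~\ref{buoy} or any volume count. It fixes $\theta$ and uses $P\subseteq Q$ to compare the cap-volume functions $V_P\le V_Q$, and thereby orders the two water levels. It then argues as follows: the two submerged caps must have the same centroid because $\mathcal{C}_\delta P=\mathcal{C}_\delta Q$, but they differ by pieces lying on opposite sides of a water level. Their centroids would then be strictly separated along $\theta$ unless the caps coincide, which gives $P(\theta)=Q(\theta)$ for every $\theta$.

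That argument is local. It uses \emph{where} $Q\setminus P$ sits relative to the caps, not just how much volume it has, and some input of this kind is what your plan is missing. Be aware that the paper's ordering of the water levels is stated quickly and does not follow from $P\subseteq Q$ alone. Take $P=[0,1]^d\subseteq Q=[-10,1]\times[0,1]^{d-1}$, $\theta=e_1$, $\delta=\tfrac12$: the levels are $\tfrac12$ for $P$ and $-\tfrac{9}{2}$ for $Q$. So if you follow that route, that step must also use the hypothesis $\mathcal{C}_\delta P=\mathcal{C}_\delta Q$.
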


\begin{proof}

\begin{figure}[h]
    \centering
    \includegraphics[scale=0.22]{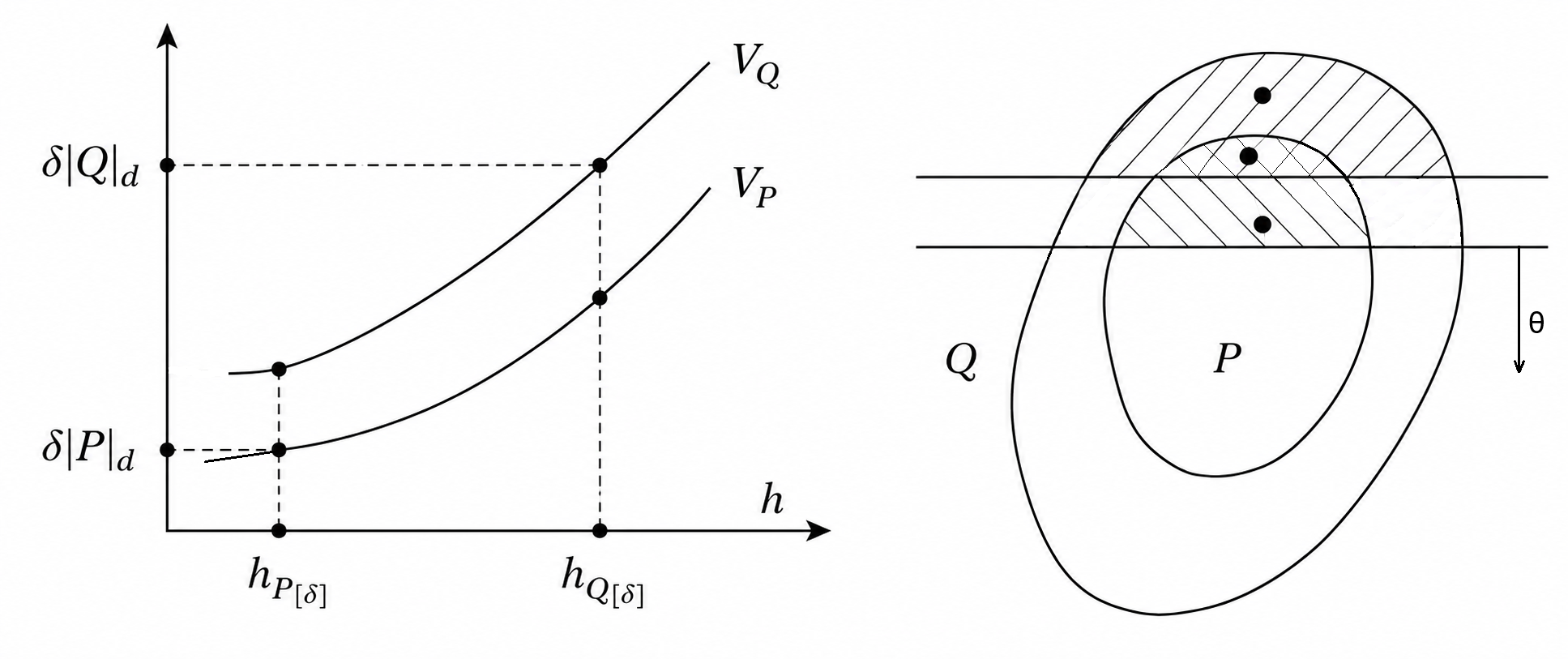}
    \caption{Functions $V_P, V_Q$ and inclusion $P \subseteq Q$.}
    \label{fig:lem6}
\end{figure}

For $h \in \mathbb{R}$, denote $P(\theta)=P \cap \{ x \cdot \theta \leq h\}$ and $V_P(h) = \left|P(\theta) \right|_d$. Since $P \subseteq Q$, we have $P(\theta) \subseteq Q(\theta)$, so $V_P(h) \leq V_Q(h)$ for $h \in \mathbb{R}$.
Also, $\delta|P|_d \leq \delta |Q|_d$, and $V_Q(h_{P_{[\delta]}}(\theta)) \geq V_P(h_{P_{[\delta]}}(\theta))$, so $V_Q(h) \geq \delta|P|_d$ for $h \geq h_{P_{[\delta]}}(\theta)$.
Then, $h_{P_{[\delta]}}(\theta) \leq h_{Q_{[\delta]}}(\theta)$, since $V_P(h_{P_{[\delta]}}(\theta)) \equiv~\delta |P|_d$, Figure \ref{fig:lem6}.

On the other hand, the centroid of $P(\theta) \setminus Q(\theta)$ is below the plane $x \cdot \theta = h_{Q_{[\delta]}}(\theta)$ in the direction $\theta$, while the centroid of $Q(\theta) \setminus P(\theta)$ is above the same plane in the direction $-\theta$. Also, since $\mathcal{C}_{\delta} P = \mathcal{C}_{\delta} Q$, we have $P(\theta) \cap Q(\theta) \neq \emptyset$. Thus, centroid of $P(\theta)$ should be strictly lower (along $\theta$) than the centroid of $P(\theta) \cap Q(\theta)$, but the opposite must hold for the centroid of $Q(\theta)$. This implies $P(\theta) = Q(\theta)$ for all $\theta \in \mathbb{S}^{d-1}$.
\end{proof}

\begin{figure}[h!]
    \centering
\includegraphics[scale=0.2]{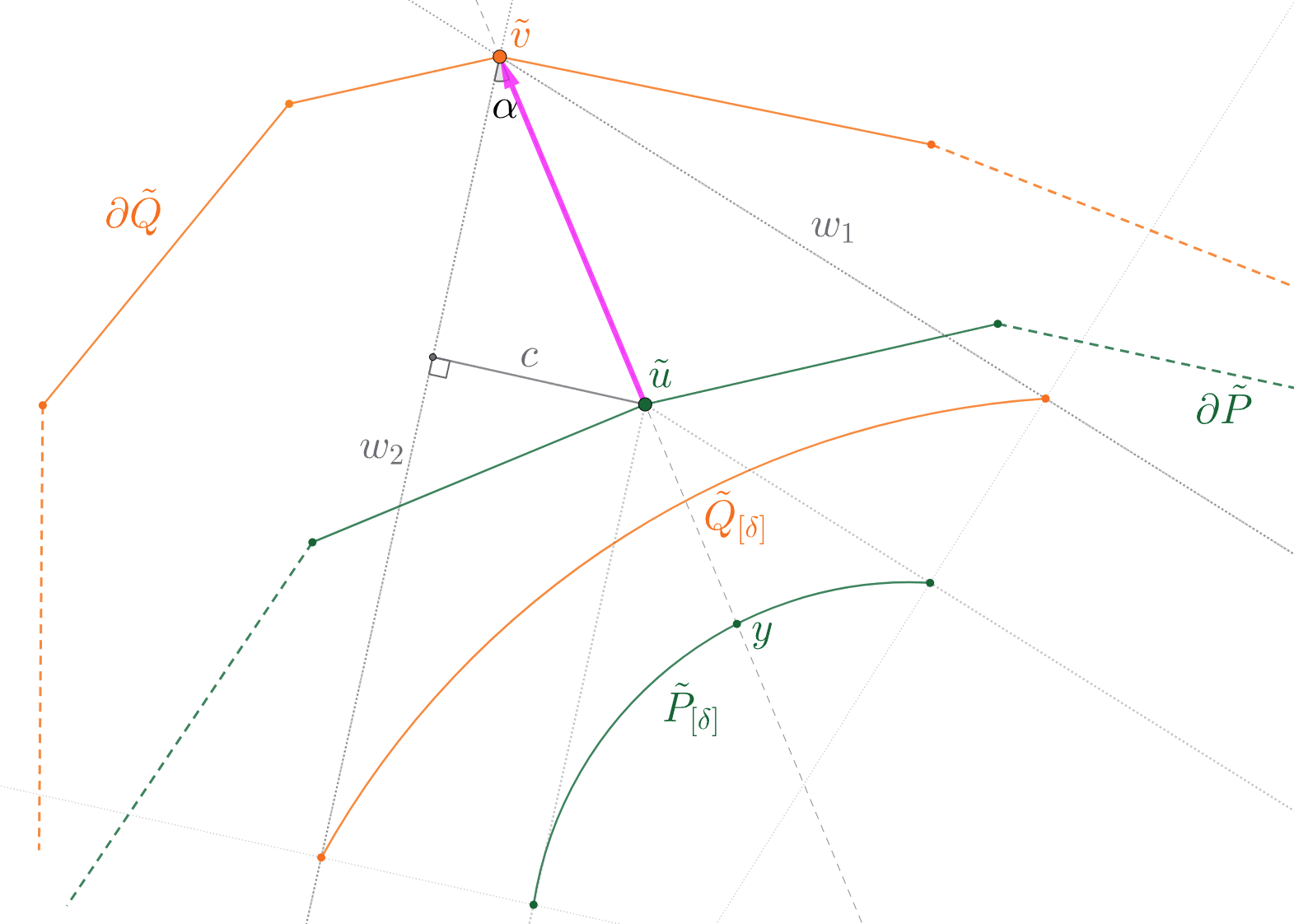}
    \caption{Inclusion of projections, $\tilde{P} \subseteq \tilde{Q}$.}
    \label{fig:incl}
\end{figure}

To conclude the proof of Theorem \ref{buoyTHM}, assume that $c \geq 0$ in Lemma \ref{buoy}. Then, by Lemma \ref{inclusion}, our goal is to show $P \subseteq Q$.  
Let $\ell$ be an $(d-2)$-dimensional plane that contains a vertex $u \in P$, $\ell \cap \text{int}P = \emptyset$, and $W_1, W_2$ be two hyperplanes containing $\ell$ that support $P_{[\delta]}$. 
The spanning vectors of $\ell$ for all vertices are dense on $\mathbb{S}^{d-1}$, and exclude the vectors parallel to the facets of the polytopes. 
Consider the projections $\tilde{u}$ of $u$, $\tilde{P}$ of $P$, and $\tilde{P}_{[\delta]}$ of $P_{[\delta]}$ onto $\ell^\perp$, dim$(\ell^\perp) = 2$, Figure~\ref{fig:incl}.

The projections of $W_1$ and $W_2$ onto $\ell^\perp$ are lines $w_1, w_2$ that support $\tilde{P}_{[\delta]}$. Since $\tilde{P}_{[\delta]}$ is connected, the ray from $\tilde{v}$ through $\tilde{u}$ intersects $\tilde{P}_{[\delta]} \subset \tilde{P}$ at a point $y$.
Indeed, for vertex $\tilde{v}$ of $\tilde{Q}$, 
$$
\tilde{v} = \tilde{u} +\frac{c}{\sin\alpha} \frac{\tilde{u}-y}{\|\tilde{u}-y\|}, \qquad \tilde{u}, y \in P,
$$
where $0<\alpha< \pi$ is the angle between $y-\tilde{u}$ and $w_2$.
Hence, $\tilde{P} \subseteq \tilde{Q}$ for all $\ell$ not parallel to the facets of the polytopes. Such set consists of almost all points on $\mathbb{S}^{d-1}$, thus $P \subseteq Q$.

\section*{Acknowledgment}
We are grateful to Dmitry Ryabogin for the inspiration and helpful discussions.

\end{document}